\title[Spectra originated  Fredholm theory and Browder's theorem ]
{Spectra originated from Fredholm theory and Browder's theorem}
\author[  M. AMOUCH, M. KARMOUNI, A. TAJMOUATI   ]
{ M. AMOUCH, M. KARMOUNI, A. TAJMOUATI }
\address{ Abdelaziz TAJMOUATI and Mohammed KARMOUNI  \newline
 Sidi Mohamed Ben Abdellah
 Univeristy
 Faculty of Sciences Dhar Al Mahraz Fez, Morocco.}
\email{abdelaziz.tajmouati@usmba.ac.ma}
\email{mohammed.karmouni@usmba.ac.ma}
\address{Mohamed AMOUCH \newline
Department of Mathematics
University Chouaib Doukkali,
Faculty of Sciences, Eljadida.
24000, Eljadida, Morocco.}
\email{mohamed.amouch@gmail.com}
\subjclass[2010]{47A53, 47B10}
\keywords{Generalized Kato decomposition, pseudo B-Fredholm operator, Browder's theorem, Riesz operator,  Commuting perturbation.}
\newtheorem{theorem}{Theorem}[section]
\newtheorem{definition}{Definition}[section]
\newtheorem{remark}{Remark}
\newtheorem{lemma}{Lemma}[section]
\newtheorem{proposition}{Proposition}[section]
\newtheorem{corollary}{Corollary}[section]
\newtheorem{example}{Example}
\begin{document}

\maketitle

\begin{abstract}
We give a new characterization of Browder's theorem  through equality between the pseudo B-Weyl  spectrum and the  generalized Drazin spectrum. Also, we will give conditions under which pseudo B-Fredholm and pseudo B-Weyl spectrum introduced in \cite{BO} and \cite{ZZ} become stable under commuting Riesz perturbations.

\end{abstract}

\section{Introduction and Preliminaries}

Throughout, $X$ denotes a complex Banach space, $\mathcal{B}(X)$
the Banach algebra of all bounded linear operators on $X$,
let $I$ be the identity operator, and for $T\in \mathcal{B}(X)$ we
denote by $T^*$,  $R(T)$,  $ R^{\infty}(T)=\bigcap_{n\geq0}R(T^n)$, $\rho(T)$,
$\sigma(T)$, $\sigma_{p}(T)$,  $\sigma_{ap}(T)$ and  $\sigma_{su}(T)$
respectively the adjoint, the range, the hyper-range,  the resolvent set, the spectrum, the point spectrum, the approximate point spectrum and the
surjectivety spectrum of $T$.
%%%%%%%%%%%%%%%%%%%%%%%%%%%%%%%%%%%%%%%%%%%%%%%%%%%%%%%%%%%%%%%%%%%%%%%%%%%%%%%%%%%%%%%%%%%%%%%%%%%%%%%%%%%%%%%

An operator $T\in\mathcal{B}(X)$ is said to be semi-regular,  if $R(T)$ is closed
and $N(T)\subseteq R^{\infty}(T)$. For subspaces $M$, $N$ of $X$ we write $M\subseteq^{e}N$ ($M$ is essentially contained in $N$) if there exists a finite-dimensional subspace $F\subset X$ such that $M\subseteq N+F$. $T\in\mathcal{B}(X)$ is said to be essentially semi-regular,  if $R(T)$ is closed
and $N(T)\subseteq^{e} R^{\infty}(T)$. The corresponding spectra are
the semi-regular spectrum $\sigma_{se}(T)$  and the essentially semi-regular spectrum $\sigma_{es}(T)$ defined by
$$\sigma_{se}(T)=\{\lambda\in\mathbb{C}: T-\lambda I\mbox{  is not semi-regular }\}$$
$$\sigma_{es}(T)=\{\lambda\in\mathbb{C}: T-\lambda I\mbox{  is not essentially semi-regular } \},  \mbox{ see  \cite{Aie}}$$
%%%%%%%%%%%%%%%%%%%%%%%%%%%%%%%%%%%%%%%%%%%%%%%%%%%%%%%%%%%%%%%%%%%%%%%%%%%%%%%%%%%%%%%%%%%%%%%%%%%%%%%%%%%%%%%%
%%%%%%%%%%%%%%%%%%%%%%%%%%%%%%%%%%

Let $E$ be a subset of $X$. $E$ is said $T$-invariant if $T(E)\subseteq E$.  We say that $T$ is completely reduced by the pair $(E, F)$ and we denote $(E, F)\in Red(T)$ if $E$ and $F$ are two closed $T$-invariant subspaces of $X$ such that
$X=E\oplus F$. In this case we write $T=T_{\shortmid {E}}\oplus T_{\shortmid {F}}$ and we say that $T$ is the direct sum of $T_{\shortmid {E}}$ and $T_{\shortmid {F}}$.
%%%%%%%%%%%%%%%%%%%%%%%%%%%%%%%%%%%%%%%%%%%%%%%%%%%%%%%%%%%%%%%%%%%%%%%%%%%%%%%%%%%%%%%%%%%%%%%%%%%%%%%%%%%%%%%
In the other hand, recall that an operator $T \in \mathcal{B}(X)$ admits a generalized Kato decomposition, ( GKD for short ),
if there exists $(X_1, X_2)\in Red(T)$ such that  $T_{\shortmid {X_{1}}}$ is semi-regular and $T_{\shortmid {X_{2}}}$
 is quasi-nilpotent, in this case $T$ is said a pseudo Fredholm operator. If we assume in the definition above that $T_{\shortmid {X_{2}}}$ is nilpotent, then $T$ is said to be of Kato type. Clearly, every semi-regular operator is of Kato type
 and a quasi-nilpotent operator has a GKD, see \cite{lab, Mb1} for more information
 about generalized Kato decomposition.\\
%%%%%%%%%%%%%%%%%%%%%%%%%%%%%%%%%%%%%%%%%%%%%%%%%%%%%%%%%%%%%%%%%%%%%%%%%%%%%%%%%%%%%%%%%%%%%%%%%%%%%%%%%%%
\indent Recall that $T \in \mathcal{B}(X)$ is said to be quasi-Fredholm if there exists $d\in \mathbb{N}$ such that
\begin{enumerate}
	\item $R(T^{n})\cap N(T)=R(T^{d})\cap N(T)$ \ \ for all $n\geq d;$
	\item $R(T^{d})\cap N(T)$ and $R(T)+ N(T^{d})$ are closed in $X.$
\end{enumerate}

An operator is quasi-Fredholm if it is quasi-Fredholm of some degree $d$.
Note that semi-regular operators are quasi-Fredholm of degree $0$
and by results of Labrousse \cite{lab}, in the case of Hilbert spaces, the set of quasi-Fredholm
operators coincides with the set of Kato type operators.
\noindent For every bounded operator $T \in \mathcal{B}(X)$, let us define
the essential quasi-Fredholm spectrum and generalized Kato spectrum respectively by:
$$\sigma_{eq}(T):=\{\lambda \in \mathbb{C} : T- \lambda I \text{ is not quasi-Fredholm} \};$$
$$\sigma_{gK}(T):=\{\lambda \in \mathbb{C} : T- \lambda I \text{ does not admit a generalized Kato decomposition} \}.$$
It is know that $\sigma_{gK}(T)$ is always a compact subsets of the complex plane contained in the spectrum $\sigma(T)$ of $T$ \cite[Corollary 2.3]{JZ}.
Note that $\sigma_{gK}(T)$ is not necessarily non-empty.
For example, all quasi-nilpotent operator has an empty generalized Kato spectrum, see \cite{JZ, QH} for more information about $\sigma_{gK}(T).$
%%%%%%%%%%%%%%%%%%%%%%%%%%%%%%%%%%%%%%%%%%%%%%%%%%%%%%%%%%%%%%%%%%%%%%%%%%%%%%%%%%%%%%%%%%%%%%%%%%%%%%%%%%%

A bounded linear operator is called an upper semi-Fredholm
(resp, lower semi Fredholm) if $dim N(T)<\infty \mbox{ and  } R(T) \mbox{  is closed }$
(resp, $codim R(T) <\infty$). $T$ is semi-Fredholm if it is a lower or upper semi-Fredholm operator.
The index of a semi-Fredholm operator $T$ is defined by $ind(T):= dim N(T)- codim R(T).$ Also,
$T$ is a Fredholm operator if it is a lower and upper semi-Fredholm operator,
and $T$ is called a Weyl operator if it is a Fredholm of index zero.\\
The essential  and Weyl spectra  of $T$  are closed and  defined  by :
$$\sigma_{e}(T)=\{\lambda\in \mathbb{C}:  T-\lambda I \mbox{  is not a  Fredholm operator}\};$$
$$\sigma_{W}(T)=\{\lambda\in \mathbb{C}:  T-\lambda I  \mbox{ is not  a Weyl operator}\}.$$

Recall  that an operator $R\in\mathcal{B}(X)$ is said to be Riesz if $R-\mu I$
is Fredholm for every non-zero complex number $\mu$.
Of course compact and quasi-nilpotent operators are particular cases of Riesz operators.

Let $T\in\mathcal{B}(X)$, the ascent of $T$ is defined by $a(T)=min\{p\in\mathbb{N}: N(T^p)=N(T^{p+1})\}$, if such $p$ does not exist we let $a(T)=\infty$. Analogously the descent of $T$ is $d(T)=min\{q\in\mathbb{N}: R(T^q)=R(T^{q+1})\}$, if such $q$ does not exist we let $d(T)=\infty$ \cite{LT}. It is well known that if both $a(T)$ and $d(T)$ are finite then $a(T)=d(T)$ and we have the decomposition $X=R(T^p)\oplus N(T^p)$ where $p=a(T)=d(T)$.\\
An operator   $T\in \mathcal{B}(X)$ is upper semi-Browder if $T$ is upper semi-Fredholm and $a(T)<\infty$. If  $T\in \mathcal{B}(X)$ is lower semi-Fredholm and $d(T)<\infty$ then $T$ is lower semi-Browder. $T$ is called Browder operator if it is a lower and upper Browder operator.

An operator $T\in \mathcal{B}(X)$  is said to be B-Fredholm,
if for some integer $n\geq0$ the range $R(T^n)$ is closed and $T_n$,
the restriction of $T$ to $R(T^n)$ is a Fredholm operator.
This class of operators, introduced and studied by Berkani et al. in a series of papers
extends the class of semi-Fredholm operators.
$T$ is said to be a B-Weyl operator if
$T_n$ is a Fredholm operator of index zero. The B-Fredholm and   B-Weyl spectra are defined by
$$\sigma_{BF}(T)=\{\lambda\in \mathbb{C}:\>\> T-\lambda I \mbox{ is not  B-Fredholm} \};$$
$$\sigma_{BW}(T)=\{\lambda\in \mathbb{C}:\>\> T-\lambda I \mbox{ is not  B-Weyl}\}.$$
Note that $T$ is a B-Fredholm operator if  there exists $(X_1, X_2)\in  Red(T)$  such that  $T_{\shortmid {X_{1}}}$ is Fredholm
and $T_{\shortmid {X_{2}}}$ is nilpotent, see \cite[Theorem 2.7]{B1}.
Also, $T$ is a B-Weyl operator if and only if
$T_{\shortmid X_{1}}$  is a Weyl operator and $T_{\shortmid X_{2}}$  is a nilpotent operator.\\
\indent More recently, B-Fredholm and B-Weyl operators were generalized to pseudo B-Fredholm and pseudo B-Weyl, see
\cite{BO, ZZ}, precisely,
$T$ is a pseudo B-Fredholm operator, if  there exists  $(X_1, X_2)\in Red(T)$
such that  $T_{\shortmid X_{1}}$ is a Fredholm  operator and $T_{\shortmid X_{2}}$ is a quasi-nilpotent operator.
$T$ is said to be  pseudo B-Weyl operator if there exists  $(X_1, X_2)\in Red(T)$  such that  $T_{\shortmid X_{1}}$ is a Weyl  operator and $T_{\shortmid X_{2}}$ is a quasi-nilpotent operator.
The  pseudo B-Fredholm and  pseudo B-Weyl spectra are  defined by:
$$\sigma_{pBF}(T)=\{\lambda\in \mathbb{C}:\>\> T-\lambda I \mbox{ is  not  pseudo B-Fredholm} \};$$
$$\sigma_{gD\mathcal{W}}(T)=\{\lambda\in \mathbb{C}:\>\> T-\lambda I \mbox{ is  not   pseudo B-Weyl} \}.$$
Let $T\in \mathcal{B}(X)$,
$T$ is said to be Drazin invertible if there exist a positive integer $k$ and an operator  $S\in\mathcal{B}(X)$ such that  $$ST=TS, \,\,\,T^{k+1}S=T^k\,\, \,\,and\,\,  S^2T=S.$$ Which is also equivalent to  the fact that $T=T_{1}\oplus T_{2}$; where $T_{1}$ is invertible  and $T_{2}$ is nilpotent. The Drazin spectrum is defined by $$\sigma_{D}(T)=\{\lambda\in \mathbb{C}: T-\lambda I  \mbox{ is not Drazin invertible } \}.$$ The concept of Drazin invertible operators has been generalized by Koliha \cite{K}. In fact, $T\in \mathcal{B}(X)$ is generalized Drazin invertible if and only if $0\notin acc(\sigma(T))$, where   $acc(\sigma(T))$ is the set of   accumulation points  of $\sigma(T)$. This  is also equivalent to the fact that
there exists  $(X_1, X_2)\in Red(T)$  such that  $T_{\shortmid {X_{1}}}$ is invertible and $T_{\shortmid {X_{2}}}$ is quasi-nilpotent.
The  generalized Drazin spectrum is defined by $$\sigma_{gD}(T)=\{\lambda\in \mathbb{C}: T-\lambda I \mbox{ is not  generalized Drazin invertible } \}.$$
%%%%%%%%%%%%%%%%%%%%%%%%%%%%%%%%%%%%%%%%%%%%%%%%%%%%%%%%%%%%%%%%%%%%%%%%%%%%%%%%%%%%%%%%%%%%%%%%%%%%%%%%%%%%%%%%%%%%%%%%%%%%%%%%%%%%%%%%%%%%%%%%%%%%%
The concept of analytical core for an operator has been introduced  by $Vrbov\grave{a}$  in \cite{vrb} and study by Mbekhta \cite{Mb1, Mb22}, that is the following set:
\begin{center}
 $K(T) = \{x \in X \>: \> \exists (x_n)_{n\geq0} \subset  X \>\mbox{ and }\> \delta > 0 \>\mbox{ such that } \>x_{0}=x,\> T x_n = x_{n-1} \>\forall n \geq1 \>\mbox{ and }\> \|x_{n}\| \leq \delta^{n}\|x\|\}$
 \end{center}
 The quasi-nilpotent part of $T$, $H_{0}(T)$ is given by :
  $$H_{0}(T):=\{x\in X; r_{T}(x)=0\} \mbox{ where  } r_{T}(x)=\displaystyle{\lim_{n\rightarrow+\infty}} ||T^nx||^{\frac{1}{n}}.$$
\indent In \cite{CZ}, M D. Cvetkovi\'{c} and  S\v{C}. \v{Z}ivkovi\'{c}-Zlatanovi\'{c} introduced  and studied  a new
concept of  generalized Drazin invertibility of bounded operators as a generalization of generalized Drazin invertible operators. In fact,  an operator $T\in\mathcal{B}(X)$ is said to be  generalized Drazin bounded below if $H_0(T)$ is closed and complemented with a subspace $M$ in $X$ such that $(M, H_0(T))\in Red(T)$ and  $ T(M)$ is closed which is equivalent to  there exists $(M, N)\in Red(T)$ such that   $T_{\shortmid {M}}$ is  bounded below and $T_{\shortmid N}$ is  quasi-nilpotent, see \cite[Theorem 3.6]{CZ}.
An operator $T\in\mathcal{B}(X)$ is said to be  generalized Drazin surjective if $K(T)$ is closed and complemented with a subspace $N$ in $X$ such that $ N\subseteq H_0(T)$  and $(K(T), N)\in Red(T)$ which is equivalent to there exists $(M, N)\in Red(T)$ such that   $T_{\shortmid {M}}$ is surjective and $T_{\shortmid N}$ is  quasi-nilpotent, see \cite[Theorem 3.7]{CZ}.\\
The   generalized Drazin bounded below and surjective spectra of $T\in \mathcal{B}(X)$ are defined respectively by:
    $$\sigma_{gD\mathcal{M}}(T)=\{\lambda\in\mathbb{C},\,\, T-\lambda  I \mbox{  is  not   generalized   Drazin bounded below} \};$$
    $$\sigma_{gD\mathcal{Q}}(T)=\{\lambda\in\mathbb{C},\,\, T-\lambda I \mbox{ is  not   generalized  Drazin surjective} \}.$$
     From \cite{CZ}, we have:
    $$\sigma_{gD}(T)=\sigma_{gD\mathcal{M}}(T)\cup \sigma_{gD\mathcal{Q}}(T).$$

 %%%%%%%%%%%%%%%%%%%%%%%%%%%%%%%%%%%%%%%%%%%%%%%%%%%%%%%%%%%%%%%%%%%%%%%%%%%%%%%%%%%%%%%%%%%%%%%%%%%%%%%%%%%%%%%%%%

 As a continuation of works \cite{AmB, AmZ, AZ1, BO, CZ, ZZ}, we will study  various spectra originated from Fredholm theory and related to Drazin spectrum.
After given preliminaries results, in the second section of this work,
we characterize  the equality between the pseudo B-Weyl  spectrum and generalized Drazin spectrum   by means of the Browder's theorem.
Also,  we will give serval necessary and sufficient conditions for $T$ to have equality between the spectra  originated from Fredholm theory and Drazin invertibility.
In the same direction as our work \cite{TAK},  we will give  conditions under which pseudo B-Fredholm and pseudo B-Weyl spectrum
 are stable under commuting Riesz perturbations.
 In section four, we will prove that we can perturb a pseudo B-Fredholm (resp. pseudo Fredholm) operator $T\in\mathcal{B}(X)$
by a bounded operator $S$ commuting with $T$ to obtain a
Fredholm (resp. semi-regular operator ) $T+S.$

\section{ On pseudo semi B-Fredholm (Weyl) operators }
In the following, we introduce the definition of pseudo upper B-Fredholm, pseudo lower B-Fredholm, generalized Drazin lower semi-Weyl,  generalized Drazin upper  semi-Weyl and pseudo semi B-Fredholm operators.

\begin{definition}\cite{CZ}
An operator $T\in \mathcal{B}(X)$ is said to be  pseudo upper B-Fredholm if  there exist  two $T$-invariant  closed subspaces $X_{1}$ and $X_{2}$ of $X$  such that $X=X_1\oplus X_2$ and $T_{\shortmid X_{1}}$ is upper semi-Fredholm operator and $T_{\shortmid X_{2}}$ is quasi-nilpotent. If $ind(T_{\shortmid X_{1}})\leq 0$,  $T$ is said to be generalized Drazin upper semi-Weyl.
\end{definition}

\begin{definition}\cite{CZ}
An operator $T\in \mathcal{B}(X)$ is said to be  pseudo lower  B-Fredholm if  there exist  two $T$-invariant  closed subspaces $X_{1}$ and $X_{2}$ of $X$  such that $X=X_1\oplus X_2$ and $T_{\shortmid X_{1}}$ is lower semi-Fredholm operator and $T_{\shortmid X_{2}}$ is quasi-nilpotent. If $ind(T_{\shortmid X_{1}})\leq 0$,  $T$ is said to be generalized Drazin lower semi-Weyl.
\end{definition}
\begin{definition}
 We say that $T\in\mathcal{B}(X)$ is pseudo semi B-Fredholm if  $T$ is  pseudo lower  B-Fredholm  or  pseudo upper  B-Fredholm.
 \end{definition}
It is clear that  $T$ is a pseudo B-Fredholm  operator if and only if $T$  is a pseudo lower semi B-Fredholm operator   and  pseudo upper semi B-Fredholm operator. In the same way  $T$ is  pseudo B-Weyl  if and only if $T$  is generalized Drazin lower semi-Weyl and generalized Drazin upper  semi-Weyl.
The   generalized Drazin lower semi-Weyl and generalized Drazin upper semi-Weyl  spectra   of $T\in \mathcal{B}(X)$ are defined respectively by:
    $$\sigma_{gD\mathcal{W-}}(T)=\{\lambda\in\mathbb{C},\,\, T-\lambda  I \mbox{  is  not   generalized   Drazin lower semi-Weyl} \};$$
    $$\sigma_{gD\mathcal{W+}}(T)=\{\lambda\in\mathbb{C},\,\, T-\lambda I \mbox{ is  not   generalized  Drazin upper semi-Weyl} \}.$$
     From \cite{CZ}, we have:
    $$\sigma_{gD\mathcal{W}}(T)=\sigma_{gD\mathcal{W+}}(T)\cup \sigma_{gD\mathcal{W-}}(T);$$
The pseudo upper and lower B-Fredholm spectra  of $T\in \mathcal{B}(X)$ are defined respectively by:
    $$\sigma_{puBF}(T)=\{\lambda\in\mathbb{C},\,\, T-\lambda  I \mbox{  is  not pseudo upper B-Fredholm } \};$$
    $$\sigma_{plBF}(T)=\{\lambda\in\mathbb{C},\,\, T-\lambda I \mbox{ is  not pseudo  lower B-Fredholm} \}.$$
     Also, from \cite{CZ}, we have:
    $$\sigma_{pBF}(T)=\sigma_{puBF}(T)\cup \sigma_{plBF}(T).$$

 The following results gives some relationship between  pseudo upper/lower B-Fredholm operator in terms of generalized Drazin invertibility.

\begin{proposition}\label{aaaa}
Let $T\in \mathcal{B}(X)$. If there exists $(N, F)\in Red (T)$ such that $codim F<\infty$, $dim N<\infty$ and $T_{\shortmid F}$ is  generalized Drazin bounded below, then $T$ is pseudo upper B-Fredholm.
\end{proposition}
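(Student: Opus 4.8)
The plan is to unpack the hypothesis on the restriction $T_{\shortmid F}$ and then reassemble a decomposition of the whole space. By \cite[Theorem 3.6]{CZ}, the assumption that $T_{\shortmid F}$ is generalized Drazin bounded below is equivalent to the existence of a pair $(M, Q)\in Red(T_{\shortmid F})$ for which $(T_{\shortmid F})_{\shortmid M}$ is bounded below and $(T_{\shortmid F})_{\shortmid Q}$ is quasi-nilpotent. Since $M$ and $Q$ are closed and invariant for $T_{\shortmid F}$, while $F$ is itself a closed $T$-invariant subspace, both $M$ and $Q$ are closed $T$-invariant subspaces of $X$, and the hypothesis $X=N\oplus F$ yields the internal topological direct sum $X=N\oplus M\oplus Q$.

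Next I would set $X_1:=N\oplus M$ and $X_2:=Q$ and claim that $(X_1,X_2)\in Red(T)$ exhibits $T$ as a pseudo upper B-Fredholm operator. Closedness of $X_1$ follows because it is the kernel of the bounded projection of $X$ onto $Q$ along $N\oplus M$ supplied by the direct sum above, while its $T$-invariance is immediate from that of $N$ and $M$. Along this decomposition $T_{\shortmid X_2}=(T_{\shortmid F})_{\shortmid Q}$ is quasi-nilpotent, so the only point left to verify is that $T_{\shortmid X_1}$ is upper semi-Fredholm.

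The main work is precisely this last verification, using $T_{\shortmid X_1}=T_{\shortmid N}\oplus T_{\shortmid M}$. Here $T_{\shortmid M}=(T_{\shortmid F})_{\shortmid M}$ is bounded below, hence injective with closed range, whereas $T_{\shortmid N}$ acts on the finite-dimensional space $N$. For the kernel I would write $N(T_{\shortmid X_1})=N(T_{\shortmid N})\oplus N(T_{\shortmid M})=N(T_{\shortmid N})$, a subspace of $N$, so that $\dim N(T_{\shortmid X_1})<\infty$. For the range, $R(T_{\shortmid X_1})=R(T_{\shortmid N})\oplus R(T_{\shortmid M})$, the sum being direct since $R(T_{\shortmid N})\subseteq N$ and $R(T_{\shortmid M})\subseteq M$; as $R(T_{\shortmid N})$ is finite-dimensional, $R(T_{\shortmid M})$ is closed, and both live in the complementary closed summands $N$ and $M$, their sum is closed. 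Hence $T_{\shortmid X_1}$ has finite-dimensional kernel and closed range, i.e. it is upper semi-Fredholm, and the proof concludes.

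I expect the step requiring the most care to be this closedness-and-kernel bookkeeping: one must confirm that the kernel and the range genuinely decompose along $X_1=N\oplus M$ (which rests on the $T$-invariance of both summands) and that adjoining the finite-dimensional contributions coming from $N$ preserves both finite-dimensionality of the kernel and closedness of the range. Everything else is a routine translation between the characterization of generalized Drazin bounded below in \cite[Theorem 3.6]{CZ} and the definition of pseudo upper B-Fredholm.
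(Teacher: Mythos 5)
Your proof is correct and is essentially the same as the paper's: both decompose $F=F_1\oplus F_2$ (your $M\oplus Q$) via the generalized Drazin bounded below hypothesis, take $X_1=N\oplus F_1$ and $X_2=F_2$, and verify that $T_{\shortmid X_1}$ is upper semi-Fredholm by exactly the same kernel and closed-range bookkeeping. Your write-up is in fact a bit more careful than the paper's (e.g.\ you justify closedness of $N\oplus M$ via the bounded projection), but there is no difference in substance.
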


\begin{proof}
If  there exists $(N, F)\in Red (T)$ such that  $codim F<\infty$, $dim N<\infty$ and $T_{\shortmid F}$ is  generalized Drazin bounded below,
then $X=F\oplus N$. Since $T_{\shortmid F}$ is generalized Drazin bounded below, then there exist two closed $T$-invariant subspaces $F_{1}$ and $F_{2}$ of $F$  such that $F=F_1\oplus F_2$, $T_{\shortmid F_{1}}$ is bounded below  and $T_{\shortmid F_{2}}$ is quasi-nilpotent, then $X=F_1\oplus F_2\oplus N.$
Let $M=F_1\oplus N$, $T(M)=T(F_1) +T(N)$, since $T_{\shortmid F_{1}}$ is bounded below, then $T(F_{1})$ is closed.
Since $dim N<\infty,$ then $T(M)$ is closed. Now we have $$N(T_{ \shortmid M})=N(T_{\shortmid F_{1}})\oplus N(T_{ \shortmid N})=N(T_{ \shortmid N})\subseteq N,$$
because $T_{\shortmid F_{1}}$ is bounded below. Therefore, $T_{ \shortmid M}$ is upper Fredholm and $T_{\shortmid F_{2}}$ is quasi-nilpotent.
Thus $T$ is  pseudo upper B-Fredholm.
\end{proof}

%%%%%%%%%%%%%%%%%%%%%%%%%%%%%%%%%%%%%%%%%%%%%%

\begin{proposition}\label{aabb}
Let $T\in \mathcal{B}(X)$.  If $T$ is pseudo lower B-Fredholm,  then there exists $F\subseteq X$ such that $codim F<\infty$ and $T_{\shortmid F}$ is  generalized Drazin surjective.

Conversely, If there exists $(N, F)\in Red (T)$ such that $codim F<\infty$, $dim N<\infty$  and $T_{\shortmid F}$ is  generalized Drazin surjective, then
 $T$ is pseudo lower B-Fredholm
\end{proposition}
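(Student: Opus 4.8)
The plan is to prove the two implications separately. The converse is the exact mirror of Proposition \ref{aaaa}, with ``surjective'' and ``lower semi-Fredholm'' playing the roles of ``bounded below'' and ``upper semi-Fredholm'', so I would reuse that argument with the obvious substitutions. For the direct implication I would decompose the lower semi-Fredholm summand through its Kato-type structure and then try to read off a generalized Drazin surjective restriction on a finite-codimensional piece. Throughout I use the characterization recalled before the statement: an operator $S$ is generalized Drazin surjective exactly when there is $(M, N) \in Red(S)$ with $S_{\shortmid M}$ surjective and $S_{\shortmid N}$ quasi-nilpotent.

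For the converse, assume $(N, F) \in Red(T)$ with $codim F < \infty$, $dim N < \infty$ and $T_{\shortmid F}$ generalized Drazin surjective. By the characterization there is $(M_1, M_2) \in Red(T_{\shortmid F})$ with $T_{\shortmid M_1}$ surjective onto $M_1$ and $T_{\shortmid M_2}$ quasi-nilpotent, so $X = M_1 \oplus M_2 \oplus N$. Setting $M := M_1 \oplus N$, both summands are $T$-invariant, hence $(M, M_2) \in Red(T)$. I would then compute $R(T_{\shortmid M}) = T(M_1) + T(N) = M_1 + T(N)$; since $T_{\shortmid M_1}$ is onto $M_1$ and $T(N) \subseteq N$ is finite-dimensional, this range is the sum of a closed subspace and a finite-dimensional one, hence closed, and its codimension in $M$ is at most $dim N < \infty$. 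Thus $T_{\shortmid M}$ is lower semi-Fredholm while $T_{\shortmid M_2}$ is quasi-nilpotent and $X = M \oplus M_2$, so $T$ is pseudo lower B-Fredholm. This step is routine and parallels Proposition \ref{aaaa} line for line.

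For the direct implication, write $X = X_1 \oplus X_2$ with $T_1 := T_{\shortmid X_1}$ lower semi-Fredholm and $T_2 := T_{\shortmid X_2}$ quasi-nilpotent. Since $T_1$ is semi-Fredholm it is essentially semi-regular, hence of Kato type, so I would fix a decomposition $X_1 = Y_1 \oplus Y_2$ with $T_{\shortmid Y_1}$ semi-regular, $T_{\shortmid Y_2}$ nilpotent and $dim Y_2 < \infty$, the summand $T_{\shortmid Y_1}$ inheriting the lower semi-Fredholm property. Then $F := Y_1 \oplus X_2$ is $T$-invariant with $codim F = dim Y_2 < \infty$, and $T_{\shortmid F} = T_{\shortmid Y_1} \oplus T_2$ already exhibits the quasi-nilpotent summand $T_2$, so it remains to see that $T_{\shortmid Y_1}$ furnishes the surjective part of a generalized Drazin surjective splitting. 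I expect this to be the main obstacle: a semi-regular operator with finite-codimensional range need not be surjective, the unilateral shift being the cautionary example, so the naive hope that $T_{\shortmid Y_1}$ is outright surjective can fail. The delicate work is therefore the codimension bookkeeping for the semi-regular operator $T_{\shortmid Y_1}$ --- tracking $codim R(T_{\shortmid Y_1}^{n})$ together with the behaviour of $R^{\infty}(T_{\shortmid Y_1})$ and $K(T_{\shortmid Y_1})$ --- to determine when a further finite-dimensional adjustment of $F$ yields a genuine surjective-plus-quasi-nilpotent decomposition, and to pin down any index condition under which it does. Reconciling the finite-codimensional range with actual surjectivity is the crux on which the whole direct implication turns.
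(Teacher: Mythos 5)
Your converse argument is correct and follows the paper's own proof of that half essentially line for line (your closedness bookkeeping is, if anything, more careful than the paper's). The genuine gap is the forward implication: after passing to the Kato-type decomposition of the lower semi-Fredholm summand you stop at what you call the crux, so that direction is never actually proved --- a proof attempt that ends by naming the difficulty is not a proof.

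You should know, however, that your instinct about where the difficulty lies is exactly right, and in fact it is fatal. The paper's own proof of the forward direction takes $F = R(T_1)\oplus X_2$, where $T_1 = T_{\shortmid X_1}$ is the lower semi-Fredholm summand (so $R(T_1)$ is closed of finite codimension in $X_1$), and then simply asserts that $T_{\shortmid R(T_1)}$ is surjective. That assertion says $T(R(T_1)) = R(T_1)$, i.e.\ $R(T_1^2) = R(T_1)$, which is equivalent to $T_1$ having descent at most $1$ and is false in general --- it is precisely the shift phenomenon you flagged. Worse, the unilateral shift $S$ on $\ell^2(\mathbb{N})$ refutes the implication itself, not just this proof: $S$ is lower semi-Fredholm, hence pseudo lower B-Fredholm; but if $F$ is any closed $S$-invariant subspace of finite codimension and $(M,N)\in Red(S_{\shortmid F})$ with $S(M)=M$ and $S_{\shortmid N}$ quasi-nilpotent, then $M = S^n(M) \subseteq R(S^n)$ for every $n$, so $M \subseteq \bigcap_{n} R(S^n) = \{0\}$, forcing $S_{\shortmid F}$ itself to be quasi-nilpotent --- impossible, since $S_{\shortmid F}$ is an isometry on a nonzero space and has spectral radius $1$. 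So no finite-dimensional adjustment or index condition will complete your argument as the statement stands: the forward half of the proposition is false, and would only become true under an additional hypothesis that stabilizes the range chain (e.g.\ finite descent of $T_1$, in which case $F = R(T_1^d)\oplus X_2$ works). In short, the gap you left open is real, but it is the statement's gap --- and the paper's --- not merely yours.
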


\begin{proof}
If $T$ is pseudo lower B-Fredholm, then there exist  two closed $T$-invariant subspaces $X_{1}$ and $X_{2}$ of $X$
such that $X=X_1\oplus X_2$ and $T_1=T_{\shortmid X_{1}}$ is lower semi-Fredholm and $T_2=T_{\shortmid X_{2}}$ is quasi-nilpotent.
Since $T_1$ is lower semi-Fredholm, then $codim R(T_1)<\infty,$ hence there exists $N\subseteq X_1$  such that, $dim N<\infty $ and $X_1=R(T_1)\oplus N.$
Thus, $X=N\oplus R(T_1)\oplus X_2.$
Let $F=R(T_1)\oplus X_2$, then $codim F<\infty$ and  $T_{\shortmid R(T_1)}$ is surjective and $T_2$ is quasi-nilpotent, so $T$ is  generalized Drazin surjective.

Conversely, if there exists  $(N, F)\in Red (T)$ such that $codim F<\infty$, $dim N<\infty$  and $T_{\shortmid F}$ is  generalized Drazin surjective. Since $T_{\shortmid F}$ is  generalized Drazin surjective, then there exists two closed $T$-invariant subspaces $F_{1}$ and $F_{2}$ of $F$  such that
$F=F_1\oplus F_2$ and $T_{\shortmid F_{1}}$ is surjective  and $T_{\shortmid F_{2}}$ is quasi-nilpotent, then $X=F_1\oplus F_2\oplus N.$
Let $M=F_1\oplus N$,  since $T_{\shortmid F_{1}}$ is surjective, then $T_{\shortmid F_{1}}$ is  lower Fredholm. Since $T_{\shortmid N}$ is finite rank operator, so $T_{\shortmid M}=T_{\shortmid F_{1}}\oplus T_{\shortmid N}$ is lower Fredholm. Therefore, $T_{\shortmid F_{1}}\oplus T_{\shortmid N}$ is lower Fredholm and $T_{\shortmid F_{2}}$ is quasi-nilpotent. So, $T$ is  pseudo lower B-Fredholm.
\end{proof}
%%%%%%%%%%%%%%%%%%%%%%%%%%%%%%%%%%%%%%%%%%%%%%%%%%%%%%%%%%%%%%%%%%%%%%%%%%%%%%%%%%%%%%%%%%%%%%%%%%%%%%%%%%%%%%%%%%%%%%%%%%%%%%%%%%%%%%%
%%%%%%%%%%%%%%%%%%%%%%%%%%%%%%%%%%%%%%%%%%%%%%%%%%%%%%%%%%%%%%%%%%%%%% GDM  AND GDW   %%%%%%%%%%%%%%%%%%%%%%%%%%%%%%%%%%%%%%%%%%%%%%%
%%%%%%%%%%%%%%%%%%%%%%%%%%%%%%%%%%%%%%%%%%%%%%%%%%%%%%%%%%%%%%%%%%%%%%%%%%%%%%%%%%%%%%%%%%%%%%%%%%%%%%%%%%%%%%%%%%%%%%%%%%%%%%%%
Recall that $T\in\mathcal{B}(X)$ is said to have the single
valued extension property at $\lambda_{0}\in\mathbb{C}$ (SVEP for short) if
for every  open neighbourhood   $U\subseteq \mathbb{C}$ of
$\lambda_{0}$, the only  analytic function  $f: U\longrightarrow
X$ which satisfies
the equation $(T-zI)f(z)=0$ for all $z\in U$ is the function $f\equiv 0$. An operator $T$ is said to have the SVEP if $T$ has the SVEP for
every $\lambda\in\mathbb{C}$. Obviously, every operator $T\in\mathcal{B}(X)$ has the SVEP at every $\lambda\in\rho(T)=\mathbb{C}\setminus\sigma(T)$, hence $T$ and $T^*$ have the SVEP at every point of the boundary  $\partial( \sigma(T))$ of the spectrum. Also, we have the implication
$$a(T)<\infty\Longrightarrow T \mbox{ has SVEP at } 0.$$ $$d(T)<\infty\Longrightarrow T^* \mbox{ has SVEP at } 0.$$

In \cite{CZ}, the authors gave some examples showing that $\sigma_{gD\mathcal{M}}(T)\subset\sigma_{gD\mathcal{W}+}(T)$, $\sigma_{gD\mathcal{Q}}(T)\subset\sigma_{gD\mathcal{W}-}(T)$ and
$\sigma_{gD}(T)\subset\sigma_{gD\mathcal{W}}(T)$ can be proper. In the following results we give serval necessary and sufficient conditions for $T$ to have equality.
\begin{proposition}\label{acv}
Let $T\in \mathcal{B}(X)$, then
$\sigma_{gD\mathcal{M}}(T)=\sigma_{gD\mathcal{W}+}(T)$ if and only if $T$ has SVEP at every $\lambda\notin\sigma_{gD\mathcal{W}+}(T)$
\end{proposition}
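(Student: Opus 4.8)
The plan is to prove the two inclusions between $\sigma_{gD\mathcal{M}}(T)$ and $\sigma_{gD\mathcal{W}+}(T)$ separately, noting that one of them holds unconditionally while the other is precisely what the SVEP hypothesis supplies. First I would record that $\sigma_{gD\mathcal{W}+}(T)\subseteq\sigma_{gD\mathcal{M}}(T)$ always holds: if $T-\lambda I$ is generalized Drazin bounded below, witnessed by $(M,N)\in Red(T)$ with $(T-\lambda I)_{\shortmid M}$ bounded below and $(T-\lambda I)_{\shortmid N}$ quasi-nilpotent, then $(T-\lambda I)_{\shortmid M}$ is injective with closed range, hence upper semi-Fredholm of index $\leq 0$, so $T-\lambda I$ is generalized Drazin upper semi-Weyl. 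Consequently the asserted equality is equivalent to the reverse inclusion $\sigma_{gD\mathcal{M}}(T)\subseteq\sigma_{gD\mathcal{W}+}(T)$, that is, to the pointwise implication that for $\lambda\notin\sigma_{gD\mathcal{W}+}(T)$ the operator $T-\lambda I$ is in fact generalized Drazin bounded below.

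For the sufficiency, fix $\lambda\notin\sigma_{gD\mathcal{W}+}(T)$ and write the witnessing decomposition $X=X_1\oplus X_2$ with $(X_1,X_2)\in Red(T)$, $(T-\lambda I)_{\shortmid X_1}$ upper semi-Fredholm (of index $\leq 0$) and $(T-\lambda I)_{\shortmid X_2}$ quasi-nilpotent. Since SVEP is inherited by the summands of a completely reducing decomposition and a quasi-nilpotent operator has SVEP, the hypothesis that $T$ has SVEP at $\lambda$ forces $(T-\lambda I)_{\shortmid X_1}$ to have SVEP at $0$. Being semi-Fredholm, $(T-\lambda I)_{\shortmid X_1}$ is of Kato type, so $X_1$ splits further as $Y_1\oplus Y_2$ with $(T-\lambda I)_{\shortmid Y_1}$ semi-regular and $(T-\lambda I)_{\shortmid Y_2}$ nilpotent, and again SVEP descends to the semi-regular part. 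The decisive step is the classical fact that a semi-regular operator with SVEP at $0$ is injective (its null space lies in $R^{\infty}$, and otherwise an analytic local eigenfunction could be manufactured contradicting SVEP); as semi-regularity already yields closed range, $(T-\lambda I)_{\shortmid Y_1}$ is bounded below. Regrouping, $X=Y_1\oplus(Y_2\oplus X_2)$ with $(Y_1,Y_2\oplus X_2)\in Red(T)$, where $(T-\lambda I)_{\shortmid Y_1}$ is bounded below and $(T-\lambda I)_{\shortmid Y_2\oplus X_2}$ is quasi-nilpotent (a direct sum of two quasi-nilpotent operators has spectrum $\{0\}$, hence is quasi-nilpotent). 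Thus $T-\lambda I$ is generalized Drazin bounded below, i.e.\ $\lambda\notin\sigma_{gD\mathcal{M}}(T)$, which gives the reverse inclusion and the equality.

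For the necessity, assume the equality and take $\lambda\notin\sigma_{gD\mathcal{W}+}(T)=\sigma_{gD\mathcal{M}}(T)$. Then $T-\lambda I$ is generalized Drazin bounded below, so $X=M\oplus N$ with $(T-\lambda I)_{\shortmid M}$ bounded below and $(T-\lambda I)_{\shortmid N}$ quasi-nilpotent. The first restriction is injective, so $a\big((T-\lambda I)_{\shortmid M}\big)=0<\infty$ and hence has SVEP at $0$ by the stated implication $a(\cdot)<\infty\Rightarrow$ SVEP; the second has SVEP as a quasi-nilpotent operator. Since SVEP of $T$ at $\lambda$ is equivalent to SVEP of both restrictions at $\lambda$, $T$ has SVEP at $\lambda$, as required.

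The unconditional inclusion and the necessity direction are essentially bookkeeping with the implication $a(T)<\infty\Rightarrow T$ has SVEP at $0$ and the behaviour of SVEP under completely reducing decompositions. I expect the main obstacle to lie in the sufficiency direction, specifically in the passage from \emph{upper semi-Fredholm with SVEP at $0$} to \emph{bounded below modulo a quasi-nilpotent summand}: the crux is isolating the semi-regular part through the Kato-type decomposition of the semi-Fredholm restriction and invoking the rigidity of semi-regular operators under SVEP (SVEP at $0$ forces injectivity). Some care is also required to verify that regrouping the nilpotent and the quasi-nilpotent pieces produces a genuine completely reducing pair whose restriction is still quasi-nilpotent.
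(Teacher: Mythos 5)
Your proof is correct, and on the sufficiency direction it is essentially the paper's own argument made rigorous: the paper passes directly from ``$T-\lambda I$ is generalized Drazin upper semi-Weyl'' to a pair $(M,N)\in Red(T)$ with $(T-\lambda I)_{\shortmid M}$ semi-regular and $(T-\lambda I)_{\shortmid N}$ quasi-nilpotent, with no justification, and then uses exactly your key fact (semi-regular plus SVEP at $0$ implies bounded below). Your explicit refinement --- splitting the upper semi-Fredholm summand by Kato's decomposition into semi-regular $\oplus$ nilpotent and regrouping the nilpotent piece with the quasi-nilpotent one, checking that the regrouped pair is still completely reducing and its restriction still quasi-nilpotent --- is precisely the step the paper leaves implicit, so here you have filled a gap rather than deviated. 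Where you genuinely diverge is the necessity direction: the paper observes that generalized Drazin bounded below forces $H_0(T-\lambda I)$ to be closed and then cites Aiena--Biondi (\cite[Theorem 1.7]{AP}) to obtain SVEP at $\lambda$, whereas you decompose $T-\lambda I$ as (bounded below) $\oplus$ (quasi-nilpotent), note that the first summand has ascent $0$ and hence SVEP at $0$ (the implication recorded in the paper's preliminaries), that the second has SVEP since it is quasi-nilpotent, and conclude by the stability of SVEP under direct sums. Both routes are sound; yours is more elementary and self-contained, avoiding the quasi-nilpotent part $H_0$ and the external theorem entirely, at the cost of invoking the (standard, but unstated in the paper) fact that SVEP at a point passes to and from the summands of a completely reducing pair --- the same fact you already need in the sufficiency direction, so nothing extra is really consumed. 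The paper's route is shorter given the cited theorem and reuses machinery that it deploys again in the companion results for the surjective and lower semi-Weyl spectra.
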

\begin{proof}
Assume  that $T$ has SVEP at every $\lambda\notin\sigma_{gD\mathcal{W}+}(T)$. If  $\lambda\notin \sigma_{gD\mathcal{W}+}(T)$, then  $T-\lambda I$ is
generalized Drazin upper semi-Weyl, then there exists $(M,N)\in Red(T)$ such that $(T-\lambda I)_{|M}$ is semi-regular  and  $(T-\lambda I)_{|N}$ is quasi-nilpotent. $T$ has SVEP  at every $\lambda\notin\sigma_{gD\mathcal{W}+}(T)$, it follows that  $(T-\lambda I)_{|M}$ has the SVEP at $0$, then $(T-\lambda I)_{|M}$ is bounded below. Hence  $T-\lambda I$ is generalized Drazin bounded below, $\lambda\notin \sigma_{gD\mathcal{M}}(T)$, and since the reverse implication  holds for every operator we conclude that  $\sigma_{gD\mathcal{M}}(T)=\sigma_{gD\mathcal{W}+}(T)$. Conversely, suppose that   $\sigma_{gD\mathcal{M}}(T)=\sigma_{gD\mathcal{W}+}(T)$.  If $\lambda\notin\sigma_{gD\mathcal{W}+}(T)$ then $T-\lambda I$ is generalized Drazin bounded below so $H_0(T-\lambda I)$ is closed. By \cite[Theorem 1.7]{AP},  $T$ has  SVEP at $\lambda$.
\end{proof}

We denote by $\sigma_{lB}(T)$ and $\sigma_{lW}(T)$ respectively the lower Browder and lower Weyl spectra. In the same way we have the following result.

\begin{proposition}\label{acvv}
Let $T\in \mathcal{B}(X)$, then
$\sigma_{gD\mathcal{Q}}(T)=\sigma_{gD\mathcal{W}-}(T)$ if and only if $T^*$ has SVEP at every $\lambda\notin\sigma_{gD\mathcal{W}-}(T)$
\end{proposition}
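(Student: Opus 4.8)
The plan is to obtain Proposition \ref{acvv} as the adjoint counterpart of Proposition \ref{acv}. The driving mechanism is the duality between the operator classes at play: for $S\in\mathcal{B}(X)$, $S$ is surjective if and only if $S^{*}$ is bounded below, $S$ is lower semi-Fredholm if and only if $S^{*}$ is upper semi-Fredholm (the index changing sign, so that the generalized Drazin lower semi-Weyl class for $T$ matches the generalized Drazin upper semi-Weyl class for $T^{*}$), and $S$ is quasi-nilpotent if and only if $S^{*}$ is. Moreover, if $(M,N)\in Red(T)$ then $(N^{\perp},M^{\perp})\in Red(T^{*})$, and under the identifications $N^{\perp}\cong M^{*}$, $M^{\perp}\cong N^{*}$ one has $(T^{*}-\lambda I)_{\shortmid N^{\perp}}\cong ((T-\lambda I)_{\shortmid M})^{*}$ and $(T^{*}-\lambda I)_{\shortmid M^{\perp}}\cong ((T-\lambda I)_{\shortmid N})^{*}$. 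Consequently $T-\lambda I$ is generalized Drazin lower semi-Weyl (resp. generalized Drazin surjective) if and only if $T^{*}-\lambda I$ is generalized Drazin upper semi-Weyl (resp. generalized Drazin bounded below), giving $\sigma_{gD\mathcal{W}-}(T)=\sigma_{gD\mathcal{W}+}(T^{*})$ and $\sigma_{gD\mathcal{Q}}(T)=\sigma_{gD\mathcal{M}}(T^{*})$. I would also record the inclusion $\sigma_{gD\mathcal{Q}}(T)\subseteq\sigma_{gD\mathcal{W}-}(T)$, valid for every operator as in \cite{CZ}.

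For the forward implication I would argue exactly as in Proposition \ref{acv}, but on the adjoint of the $M$-part. Fix $\lambda\notin\sigma_{gD\mathcal{W}-}(T)$ and choose $(M,N)\in Red(T)$ with $(T-\lambda I)_{\shortmid M}$ lower semi-Weyl and $(T-\lambda I)_{\shortmid N}$ quasi-nilpotent. Since $(N^{\perp},M^{\perp})\in Red(T^{*})$, the SVEP of $T^{*}$ at $\lambda$ restricts to SVEP at $\lambda$ of $(T^{*})_{\shortmid N^{\perp}}$, that is, $((T-\lambda I)_{\shortmid M})^{*}$ has SVEP at $0$. As $((T-\lambda I)_{\shortmid M})^{*}$ is upper semi-Weyl, the key step of Proposition \ref{acv} gives that it is bounded below, hence $(T-\lambda I)_{\shortmid M}$ is surjective. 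Thus $T-\lambda I$ is generalized Drazin surjective and $\lambda\notin\sigma_{gD\mathcal{Q}}(T)$; combined with the reverse inclusion this yields $\sigma_{gD\mathcal{Q}}(T)=\sigma_{gD\mathcal{W}-}(T)$.

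For the converse, I would assume $\sigma_{gD\mathcal{Q}}(T)=\sigma_{gD\mathcal{W}-}(T)$ and take $\lambda\notin\sigma_{gD\mathcal{W}-}(T)=\sigma_{gD\mathcal{Q}}(T)$. Then $T-\lambda I$ is generalized Drazin surjective, so by the duality above $T^{*}-\lambda I$ is generalized Drazin bounded below; writing the corresponding $(N^{\perp},M^{\perp})\in Red(T^{*})$ one obtains $H_{0}(T^{*}-\lambda I)=H_{0}(((T-\lambda I)_{\shortmid M})^{*})\oplus H_{0}(((T-\lambda I)_{\shortmid N})^{*})=\{0\}\oplus M^{\perp}$, which is closed, since a bounded below operator has trivial quasi-nilpotent part while a quasi-nilpotent one has full quasi-nilpotent part. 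Applying \cite[Theorem 1.7]{AP} to $T^{*}$ then gives that $T^{*}$ has SVEP at $\lambda$.

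The hard part will be the single-operator input inherited from Proposition \ref{acv}, namely that an upper semi-Weyl operator with SVEP at $0$ is bounded below (dually, that a lower semi-Weyl operator whose adjoint has SVEP at $0$ is surjective). This is where the genuine work lies: SVEP at $0$ only yields finite ascent of the semi-Fredholm part, and upgrading this to injectivity — hence, with closed range, to boundedness below — requires absorbing the finite-dimensional nilpotent piece into the quasi-nilpotent summand through the generalized Kato/Kato-type decomposition, using $ind\le 0$ to exclude a nonzero kernel. Once this implication is secured, everything else is the bookkeeping of the correspondence $(M,N)\mapsto(N^{\perp},M^{\perp})$, so that Proposition \ref{acvv} is literally Proposition \ref{acv} applied to $T^{*}$ after the substitutions $\sigma_{gD\mathcal{M}}(T^{*})=\sigma_{gD\mathcal{Q}}(T)$ and $\sigma_{gD\mathcal{W}+}(T^{*})=\sigma_{gD\mathcal{W}-}(T)$.
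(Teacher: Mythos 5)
Your forward implication rests on a false claim. You assert that, since $((T-\lambda I)_{\shortmid M})^{*}$ is upper semi-Weyl and has SVEP at $0$, ``the key step of Proposition \ref{acv} gives that it is bounded below, hence $(T-\lambda I)_{\shortmid M}$ is surjective''. That is not the key step of Proposition \ref{acv}: there the summand to which SVEP is applied is \emph{semi-regular} (it comes from the generalized Kato decomposition), and only for semi-regular operators does SVEP at $0$ force injectivity and hence boundedness below. For a semi-Weyl operator, SVEP at $0$ yields nothing beyond finite ascent, i.e.\ upper semi-Browder: any nonzero nilpotent operator on a finite-dimensional space is Fredholm of index $0$, has SVEP, and is not injective; dually it is lower semi-Weyl, its adjoint has SVEP at $0$, and it is not surjective. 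For the same reason your proposed repair, ``using $ind\le 0$ to exclude a nonzero kernel'', cannot work: the index of a semi-Weyl operator puts no constraint on the vanishing of its kernel. So the conclusion ``$(T-\lambda I)_{\shortmid M}$ is surjective'' is simply unavailable, and your route to $\lambda\notin\sigma_{gD\mathcal{Q}}(T)$ breaks at exactly the point you yourself flagged as ``the hard part''.

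The gap is repairable, but by refining the decomposition rather than upgrading the summand, and preferably without dualizing the restriction (a Kato-type decomposition of $((T-\lambda I)_{\shortmid M})^{*}$ lives in $M^{*}$ and does not transfer back to $M$). Keep your first steps, which correctly give that $((T-\lambda I)_{\shortmid M})^{*}$ has SVEP at $0$; then apply \cite[Theorem 3.17]{Aie} to the lower semi-Fredholm operator $(T-\lambda I)_{\shortmid M}$ to get $d((T-\lambda I)_{\shortmid M})<\infty$, so $(T-\lambda I)_{\shortmid M}$ is lower semi-Browder and, by the classical structure of semi-Browder operators, $M=M_{1}\oplus M_{2}$ with $(M_{1},M_{2})\in Red((T-\lambda I)_{\shortmid M})$, $\dim M_{2}<\infty$, $(T-\lambda I)_{\shortmid M_{1}}$ surjective and $(T-\lambda I)_{\shortmid M_{2}}$ nilpotent. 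Then $(M_{1},M_{2}\oplus N)\in Red(T-\lambda I)$ and $(T-\lambda I)_{\shortmid M_{2}\oplus N}$ is quasi-nilpotent, so $T-\lambda I$ is generalized Drazin surjective. Note also that your closing claim that Proposition \ref{acvv} ``is literally'' Proposition \ref{acv} for $T^{*}$, via $\sigma_{gD\mathcal{Q}}(T)=\sigma_{gD\mathcal{M}}(T^{*})$ and $\sigma_{gD\mathcal{W}-}(T)=\sigma_{gD\mathcal{W}+}(T^{*})$, silently uses the hard direction of these identities (passing from decompositions of $X^{*}$ back to decompositions of $X$), which you never justify, although your detailed steps only use the easy direction. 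For comparison, the paper's forward direction avoids manipulating decompositions altogether: by \cite[Theorem 3.7]{CZ}, $\lambda\notin\sigma_{gD\mathcal{W}-}(T)$ means $T-\lambda I$ has a GKD and $\lambda\notin acc\,\sigma_{lW}(T)$; SVEP of $T^{*}$ off $\sigma_{lW}(T)$ gives $\sigma_{lB}(T)=\sigma_{lW}(T)$, hence $\lambda\notin acc\,\sigma_{lB}(T)$, and the same theorem yields generalized Drazin surjectivity. Your converse (closedness of $H_{0}(T^{*}-\lambda I)$ plus \cite[Theorem 1.7]{AP} applied to $T^{*}$) is correct and essentially parallel to the paper's, which instead uses $K(T-\lambda I)+H_{0}(T-\lambda I)=X$.
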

\begin{proof}
Suppose that $T$ has SVEP at every $\lambda\notin\sigma_{gD\mathcal{W}-}(T)$. If  $\lambda\notin \sigma_{gD\mathcal{W}-}(T)$, then by \cite[Theorem 3.7]{CZ},   $T-\lambda I$ admits GKD and $\lambda\notin acc\sigma_{lW}(T)$. $T^*$ has SVEP  at every $\lambda\notin\sigma_{gD\mathcal{W}-}(T)$,  then  $T^*$ has SVEP  at every $\lambda\notin\sigma_{lW}(T)$, and so $\sigma_{lB}(T)=\sigma_{lW}(T)$. Then  $\lambda\notin acc\sigma_{lB}(T)$. Therefore, $T-\lambda I$ is generalized Drazin surjective according to \cite[Theorem 3.7]{CZ}, $\lambda\notin\sigma_{gD\mathcal{Q}}(T)$ and since the reverse implication  holds for every operator we conclude that  $\sigma_{gD\mathcal{Q}}(T)=\sigma_{gD\mathcal{W}-}(T)$. Conversely, suppose that   $\sigma_{gD\mathcal{Q}}(T)=\sigma_{gD\mathcal{W}-}(T)$.  If
$\lambda\notin\sigma_{gD\mathcal{W}-}(T)$,  then $T-\lambda I$ is generalized Drazin surjective then  $K(T-\lambda I)$ is closed and complemented with a subspace $N$ in $X$ such that $ N\subseteq H_0(T-\lambda I)$  and $(K(T-\lambda I), N)\in Red(T-\lambda I)$, so $K(T-\lambda)+H_0(T-\lambda)=X$. From \cite[Theorem 1.7]{AP},  $T^*$ has the SVEP at $\lambda$.
\end{proof}

As a consequence of the two previous results we have the following proposition.

\begin{proposition}\label{acvvv}
Let $T\in \mathcal{B}(X)$, then
$\sigma_{gD}(T)=\sigma_{gD\mathcal{W}}(T)$ if and only if $T$ and $T^*$ have the  SVEP at every $\lambda\notin\sigma_{gD\mathcal{W}}(T)$
\end{proposition}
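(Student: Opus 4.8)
The plan is to reduce the statement to the two componentwise characterizations just proved, using the decompositions $\sigma_{gD}(T)=\sigma_{gD\mathcal{M}}(T)\cup\sigma_{gD\mathcal{Q}}(T)$ and $\sigma_{gD\mathcal{W}}(T)=\sigma_{gD\mathcal{W}+}(T)\cup\sigma_{gD\mathcal{W}-}(T)$, and to argue entirely pointwise in $\lambda$. First I would record that the inclusion $\sigma_{gD\mathcal{W}}(T)\subseteq\sigma_{gD}(T)$ holds for every operator: if $T-\lambda I$ is generalized Drazin invertible then it splits as the direct sum of an invertible part and a quasi-nilpotent part, and an invertible operator is Weyl, so $T-\lambda I$ is pseudo B-Weyl. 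Hence the content of the proposition is the reverse inclusion $\sigma_{gD}(T)\subseteq\sigma_{gD\mathcal{W}}(T)$ together with the SVEP bookkeeping.

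For the implication ``SVEP $\Rightarrow$ equality'' I would take $\lambda\notin\sigma_{gD\mathcal{W}}(T)$ and observe that, since $\sigma_{gD\mathcal{W}}(T)$ is the union of $\sigma_{gD\mathcal{W}+}(T)$ and $\sigma_{gD\mathcal{W}-}(T)$, the point $\lambda$ lies outside both. As $T$ has SVEP at $\lambda$ by hypothesis, the pointwise mechanism in the proof of Proposition \ref{acv} applies: writing $T-\lambda I=(T-\lambda I)_{|M}\oplus(T-\lambda I)_{|N}$ with $(T-\lambda I)_{|M}$ semi-regular and $(T-\lambda I)_{|N}$ quasi-nilpotent, SVEP of $T$ at $\lambda$ forces $(T-\lambda I)_{|M}$ to be bounded below, so $\lambda\notin\sigma_{gD\mathcal{M}}(T)$. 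Symmetrically, SVEP of $T^*$ at $\lambda$ together with $\lambda\notin\sigma_{gD\mathcal{W}-}(T)$ and the argument of Proposition \ref{acvv} give $\lambda\notin\sigma_{gD\mathcal{Q}}(T)$. Therefore $\lambda\notin\sigma_{gD\mathcal{M}}(T)\cup\sigma_{gD\mathcal{Q}}(T)=\sigma_{gD}(T)$, which yields $\sigma_{gD}(T)\subseteq\sigma_{gD\mathcal{W}}(T)$ and hence equality.

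For the converse I would start from $\sigma_{gD}(T)=\sigma_{gD\mathcal{W}}(T)$ and fix $\lambda\notin\sigma_{gD\mathcal{W}}(T)=\sigma_{gD}(T)$. Then $T-\lambda I$ is generalized Drazin invertible, so $\lambda\notin acc\,\sigma(T)$; thus $\lambda$ is either a resolvent point or an isolated point of $\sigma(T)$, and in both cases $T$ and $T^*$ have SVEP at $\lambda$. Alternatively, and more in the spirit of the two preceding results, from $\lambda\notin\sigma_{gD\mathcal{M}}(T)$ one gets $H_0(T-\lambda I)$ closed and from $\lambda\notin\sigma_{gD\mathcal{Q}}(T)$ one gets $K(T-\lambda I)$ closed with $K(T-\lambda I)+H_0(T-\lambda I)=X$, so \cite[Theorem 1.7]{AP} delivers SVEP of $T$ and of $T^*$ at $\lambda$ respectively. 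The only point requiring care, and the main obstacle, is that the hypothesis supplies SVEP only on the complement of $\sigma_{gD\mathcal{W}}(T)$, which is smaller than the complements of $\sigma_{gD\mathcal{W}+}(T)$ and $\sigma_{gD\mathcal{W}-}(T)$ appearing in Propositions \ref{acv} and \ref{acvv}; this is why one must invoke those two results pointwise rather than as global equalities, after first checking that every $\lambda\notin\sigma_{gD\mathcal{W}}(T)$ indeed lies outside both $\sigma_{gD\mathcal{W}+}(T)$ and $\sigma_{gD\mathcal{W}-}(T)$.
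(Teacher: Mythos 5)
Your overall structure is sound, and you have correctly identified the point that the paper glosses over entirely (its ``proof'' of this proposition is literally just the phrase ``as a consequence of the two previous results''): the SVEP hypothesis here is given only on the complement of $\sigma_{gD\mathcal{W}}(T)=\sigma_{gD\mathcal{W}+}(T)\cup\sigma_{gD\mathcal{W}-}(T)$, which is \emph{smaller} than the complements of $\sigma_{gD\mathcal{W}+}(T)$ and $\sigma_{gD\mathcal{W}-}(T)$ appearing in Propositions \ref{acv} and \ref{acvv}, so those propositions cannot be invoked as global equivalences and one must argue point by point. Your converse direction is complete and correct, in both variants: generalized Drazin invertibility of $T-\lambda I$ gives $\lambda\notin acc\,\sigma(T)$, and both $T$ and $T^*$ have SVEP at resolvent points and at isolated points of the spectrum; alternatively, $\lambda\notin\sigma_{gD\mathcal{M}}(T)$ gives $H_0(T-\lambda I)$ closed and $\lambda\notin\sigma_{gD\mathcal{Q}}(T)$ gives $K(T-\lambda I)+H_0(T-\lambda I)=X$, so \cite[Theorem 1.7]{AP} applies. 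The upper half of your forward direction is also fine, because the paper's proof of Proposition \ref{acv} genuinely is pointwise.

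The gap is in the sentence ``Symmetrically, SVEP of $T^*$ at $\lambda$ together with $\lambda\notin\sigma_{gD\mathcal{W}-}(T)$ and the argument of Proposition \ref{acvv} give $\lambda\notin\sigma_{gD\mathcal{Q}}(T)$.'' Unlike Proposition \ref{acv}, the paper's proof of Proposition \ref{acvv} is \emph{not} pointwise: it uses SVEP of $T^*$ at every point outside $\sigma_{lW}(T)$ to derive the global equality $\sigma_{lB}(T)=\sigma_{lW}(T)$, and only then transfers the condition $\lambda\notin acc\,\sigma_{lW}(T)$ to $\lambda\notin acc\,\sigma_{lB}(T)$. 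With SVEP of $T^*$ known only at $\lambda$ itself --- and at other points outside $\sigma_{gD\mathcal{W}}(T)$, which need not cover the complement of $\sigma_{lW}(T)$ since $\sigma_{gD\mathcal{W}+}(T)\not\subseteq\sigma_{lW}(T)$ in general --- that argument cannot be replayed, so this step is unjustified as written. The claim you need is nevertheless true, and can be closed in two ways. (a) Mirror the mechanism of Proposition \ref{acv} on the dual side: write $T-\lambda I=(T-\lambda I)_{|M}\oplus(T-\lambda I)_{|N}$ with $(T-\lambda I)_{|M}$ semi-regular and $(T-\lambda I)_{|N}$ quasi-nilpotent; SVEP of $T^*$ at $\lambda$ passes to the summand $((T-\lambda I)_{|M})^*$, and a semi-regular operator whose adjoint has SVEP at $0$ is surjective \cite{Aie}, so $T-\lambda I$ is generalized Drazin surjective. (b) Avoid the upper/lower split altogether: since $\lambda$ lies outside both $\sigma_{gD\mathcal{W}+}(T)$ and $\sigma_{gD\mathcal{W}-}(T)$, by \cite[Theorems 3.6, 3.7]{CZ} the operator $T-\lambda I$ admits a GKD and some punctured disc centered at $\lambda$ misses $\sigma_{uW}(T)\cup\sigma_{lW}(T)=\sigma_W(T)$; every $\mu$ in that punctured disc is then a Weyl point, hence pseudo B-Weyl, hence outside $\sigma_{gD\mathcal{W}}(T)$, so the hypothesis gives SVEP of both $T$ and $T^*$ at $\mu$ and $T-\mu$ is Browder; thus $\lambda\notin acc\,\sigma_B(T)$, which combined with the GKD yields generalized Drazin invertibility of $T-\lambda I$. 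Route (b) is the one that matches the accumulation-point style of the paper's other proofs; either route repairs your argument.
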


A bounded linear operator $T$  is said to satisfy Browder's theorem if $\sigma_W(T)=\sigma_B(T)$, or equivalently $acc\sigma(T)\subseteq \sigma_W(T)$, where $\sigma_B(T)$ is the Browder spectrum of $T$.\\
It is known from \cite{Aie2} that   a-Browder's theorem holds for $T$ if $\sigma_{uW}(T)=\sigma_{uB}(T)$, or equivalently $acc\sigma_{ap}(T)\subseteq \sigma_{uW}(T)$, where $\sigma_{uB}(T)$ and $\sigma_{uW}(T)$   are the upper semi-Browder and upper semi-Weyl spectra of $T$.\\

The following  result shows that Browder's (a-Browder's) theorem holds for $T$ precisely
when  $\sigma_{gD}(T)=\sigma_{gD\mathcal{W}}(T)$ ( $\sigma_{gD\mathcal{M}}(T)=\sigma_{gD\mathcal{W}+}(T)$), which give  new characterizations for Browder's and a-Browder's theorems.
\begin{theorem}\label{rrr}
Let $T\in \mathcal{B}(X)$, then\\
1) a-Browder's theorem holds for $T$ if and only if $\sigma_{gD\mathcal{M}}(T)=\sigma_{gD\mathcal{W}+}(T)$.\\
2) a-Browder's theorem holds for $T^*$ if and only if $\sigma_{gD\mathcal{Q}}(T)=\sigma_{gD\mathcal{W}-}(T)$.\\
3) Browder's theorem holds for $T$ if and only if $\sigma_{gD}(T)=\sigma_{gD\mathcal{W}}(T)$.\\
\end{theorem}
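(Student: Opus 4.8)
The plan is to reduce each of the three equivalences to the SVEP characterizations already obtained in Propositions \ref{acv}, \ref{acvv} and \ref{acvvv}, and then to match these against the classical SVEP descriptions of (a-)Browder's theorem. Two ingredients will be used throughout. First, the elementary facts that an upper semi-Weyl operator at which $T$ has SVEP has finite ascent (hence is upper semi-Browder), and dually that a lower semi-Weyl operator at which $T^*$ has SVEP has finite descent; these give at once the equivalences ``a-Browder's theorem for $T$ $\Leftrightarrow$ $T$ has SVEP at every $\lambda\notin\sigma_{uW}(T)$'' and ``a-Browder's theorem for $T^*$ $\Leftrightarrow$ $T^*$ has SVEP at every $\lambda\notin\sigma_{uW}(T^*)=\sigma_{lW}(T)$'', together with the corresponding statement for Browder's theorem and $\sigma_W(T)$. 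Second, I will use that the set of points at which $T$ fails to have SVEP is open (an easy consequence of the identity theorem for vector-valued analytic functions), so that SVEP on a deleted neighbourhood of a point forces SVEP at that point.

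For part 1), by Proposition \ref{acv} the identity $\sigma_{gD\mathcal{M}}(T)=\sigma_{gD\mathcal{W}+}(T)$ is equivalent to $T$ having SVEP at every $\lambda\notin\sigma_{gD\mathcal{W}+}(T)$, so it suffices to prove that this last property is equivalent to a-Browder's theorem for $T$. Since $\sigma_{gD\mathcal{W}+}(T)\subseteq\sigma_{uW}(T)$, SVEP on the complement of $\sigma_{gD\mathcal{W}+}(T)$ immediately implies SVEP on the smaller complement of $\sigma_{uW}(T)$, that is, a-Browder's theorem. For the converse I would assume a-Browder's theorem, so that $T$ has SVEP at every $\lambda\notin\sigma_{uW}(T)$, and fix $\lambda\notin\sigma_{gD\mathcal{W}+}(T)$. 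By the upper analogue of \cite[Theorem 3.7]{CZ} (that is, \cite[Theorem 3.6]{CZ}), $T-\lambda I$ admits a generalized Kato decomposition and $\lambda\notin acc\,\sigma_{uW}(T)$; hence there is a punctured disc about $\lambda$ disjoint from $\sigma_{uW}(T)$ on which $T$ has SVEP, and openness of the SVEP-failure set yields SVEP at $\lambda$. Applying Proposition \ref{acv} closes the loop.

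Part 2) is the dual statement and follows the same pattern with $T$ replaced by $T^*$, using Proposition \ref{acvv}, the inclusion $\sigma_{gD\mathcal{W}-}(T)\subseteq\sigma_{lW}(T)=\sigma_{uW}(T^*)$, and the implication $\lambda\notin\sigma_{gD\mathcal{W}-}(T)\Rightarrow\lambda\notin acc\,\sigma_{lW}(T)$ from \cite[Theorem 3.7]{CZ}. For part 3) I would combine both sides through Proposition \ref{acvvv}, which asserts that $\sigma_{gD}(T)=\sigma_{gD\mathcal{W}}(T)$ holds iff both $T$ and $T^*$ have SVEP at every $\lambda\notin\sigma_{gD\mathcal{W}}(T)$. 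If Browder's theorem holds then $\sigma_W(T)=\sigma_B(T)$, so for $\lambda\notin\sigma_W(T)$ the operator $T-\lambda I$ is Browder (finite ascent and descent), whence both $T$ and $T^*$ have SVEP there; to upgrade to the complement of $\sigma_{gD\mathcal{W}}(T)$ I would use $\sigma_{gD\mathcal{W}}(T)=\sigma_{gD\mathcal{W}+}(T)\cup\sigma_{gD\mathcal{W}-}(T)$ and $acc\,\sigma_W(T)=acc\,\sigma_{uW}(T)\cup acc\,\sigma_{lW}(T)$ to see that $\lambda\notin\sigma_{gD\mathcal{W}}(T)$ forces $\lambda\notin acc\,\sigma_W(T)$, and then repeat the punctured-disc argument for $T$ and $T^*$ simultaneously. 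The reverse implication again follows from the inclusion $\sigma_{gD\mathcal{W}}(T)\subseteq\sigma_W(T)$ together with the classical SVEP characterization of Browder's theorem.

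The routine parts are the ``easy'' implications, which are mere consequences of the spectral inclusions $\sigma_{gD\mathcal{W}+}(T)\subseteq\sigma_{uW}(T)$, $\sigma_{gD\mathcal{W}-}(T)\subseteq\sigma_{lW}(T)$ and $\sigma_{gD\mathcal{W}}(T)\subseteq\sigma_W(T)$. The main obstacle will be the converse direction in each part: one must promote SVEP known only off the larger Weyl-type spectrum to SVEP off the smaller generalized-Drazin-Weyl spectrum. This hinges on the two facts I would isolate as the crux, namely the accumulation-point descriptions of $\sigma_{gD\mathcal{W}\pm}(T)$ from \cite{CZ}, which guarantee that a point outside the generalized-Drazin-Weyl spectrum is not an accumulation point of the corresponding Weyl spectrum, and the openness of the set where SVEP fails, which converts SVEP on a punctured neighbourhood into SVEP at the centre. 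Some additional care is needed in part 3) to treat $T$ and $T^*$ together and to verify the identity $acc\,\sigma_W(T)=acc\,\sigma_{uW}(T)\cup acc\,\sigma_{lW}(T)$.
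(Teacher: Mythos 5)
Your proposal is correct, and one half of it coincides with the paper's proof: the implication ``spectral equality $\Rightarrow$ (a-)Browder's theorem'' is handled in the paper exactly as you do it, via Propositions \ref{acv}, \ref{acvv}, \ref{acvvv}, the inclusions $\sigma_{gD\mathcal{W}+}(T)\subseteq\sigma_{uW}(T)$, $\sigma_{gD\mathcal{W}-}(T)\subseteq\sigma_{lW}(T)$, $\sigma_{gD\mathcal{W}}(T)\subseteq\sigma_{W}(T)$, and the localized SVEP characterizations of Browder's and a-Browder's theorems from \cite{Aie2}. Where you genuinely diverge is the converse implication ``(a-)Browder's theorem $\Rightarrow$ spectral equality''. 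The paper proves it with no SVEP argument at all: it runs a chain of equivalences based on two accumulation-point characterizations from \cite{CZ} (namely, $T-\lambda I$ is generalized Drazin bounded below iff it admits a GKD and $\lambda\notin acc\,\sigma_{uB}(T)$, and is generalized Drazin upper semi-Weyl iff it admits a GKD and $\lambda\notin acc\,\sigma_{uW}(T)$), so that a-Browder's theorem, in the form $\sigma_{uB}(T)=\sigma_{uW}(T)$, makes the two conditions literally identical; parts 2 and 3 are the same chain with the lower and two-sided spectra. You instead translate the (a-)Browder hypothesis into SVEP off the Weyl-type spectrum, use only the semi-Weyl characterization from \cite{CZ} to produce a punctured disc disjoint from $\sigma_{uW}(T)$ (resp.\ $\sigma_{lW}(T)$, $\sigma_{W}(T)$) around each point off the generalized Drazin--Weyl spectrum, transfer SVEP to the centre by openness of the SVEP-failure set, and then invoke Propositions \ref{acv}--\ref{acvvv} as full equivalences; your auxiliary identities $\sigma_{uW}(T^*)=\sigma_{lW}(T)$, $\sigma_{W}(T)=\sigma_{uW}(T)\cup\sigma_{lW}(T)$ and $acc(A\cup B)=acc\,A\cup acc\,B$ are all valid. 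The trade-off: the paper's chain is shorter and stays entirely at the level of spectra, while your route requires an extra (standard, but nowhere stated in the paper) topological lemma on the openness of the set where SVEP fails; in exchange, your argument does not need the characterizations of $\sigma_{gD\mathcal{M}}(T)$, $\sigma_{gD\mathcal{Q}}(T)$ and $\sigma_{gD}(T)$ through $acc\,\sigma_{uB}$, $acc\,\sigma_{lB}$, $acc\,\sigma_{B}$, and it exhibits the whole theorem as a purely local-SVEP statement, with the three propositions doing the work in both directions.
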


\begin{proof}
1) Suppose that a-Browder's theorem holds for $T$ implies $\sigma_{uB}(T)=\sigma_{uW}(T)$.\\
Using \cite[Theorem 3.4 and Theorem 3.6]{CZ},  we conclude that
\begin{eqnarray*}
                                       % \nonumber to remove numbering (before each equation)
\lambda\notin \sigma_{gD\mathcal{M}}(T) & \Longleftrightarrow &  T-\lambda I \mbox{ is generalized Drazin bounded below}  \\
                                         &\Longleftrightarrow &  T-\lambda I \mbox{ admits a GKD and }  \lambda\notin acc\sigma_{uB}(T) \\
                                         & \Longleftrightarrow & T-\lambda I \mbox{ admits a GKD and } \lambda\notin acc\sigma_{uW}(T) \\
                                         & \Longleftrightarrow & T-\lambda I \mbox{ is generalized Drazin upper semi-Weyl }\\
                                         & \Longleftrightarrow & \lambda\notin \sigma_{gD\mathcal{W}+}(T).
                                        \end{eqnarray*}
Hence $\sigma_{gD\mathcal{M}}(T)=\sigma_{gD\mathcal{W}+}(T)$. Conversely, if  $\sigma_{gD\mathcal{M}}(T)=\sigma_{gD\mathcal{W}+}(T)$, from  Proposition \ref{acv},   $T$ has SVEP at every $\lambda\notin\sigma_{gD\mathcal{W}+}(T)$. Since $\sigma_{gD\mathcal{W}+}(T)\subseteq \sigma_{uW}(T)$, $T$ has SVEP at every $\lambda\notin\sigma_{uW}(T)$, so a-Browder's theorem holds for $T$, see \cite[Theorem 4.34]{Aie2}.\\

2) Suppose that a-Browder's theorem holds for $T^*$ then $\sigma_{lB}(T)=\sigma_{lW}(T)$.\\
Using \cite[Theorem 3.4 and  Theorem 3.7]{CZ} we have
\begin{eqnarray*}
                                       % \nonumber to remove numbering (before each equation)
\lambda\notin \sigma_{gD\mathcal{Q}}(T) & \Longleftrightarrow &  T-\lambda I \mbox{ is generalized Drazin surjective}  \\
                                         &\Longleftrightarrow &  T-\lambda I \mbox{ admits a GKD and }  \lambda\notin acc\sigma_{lB}(T) \\
                                         & \Longleftrightarrow & T-\lambda I \mbox{ admits a GKD and } \lambda\notin acc\sigma_{lW}(T) \\
                                         & \Longleftrightarrow & T-\lambda I \mbox{ is generalized Drazin lower semi-Weyl }\\
                                         & \Longleftrightarrow & \lambda\notin \sigma_{gD\mathcal{W}-}(T).
                                        \end{eqnarray*}
Hence $\sigma_{gD\mathcal{Q}}(T)=\sigma_{gD\mathcal{W}-}(T)$. Conversely, if  $\sigma_{gD\mathcal{Q}}(T)=\sigma_{gD\mathcal{W}-}(T)$, from  Proposition \ref{acvv},   $T^*$ has SVEP at every $\lambda\notin\sigma_{gD\mathcal{W}-}(T)$. Since $\sigma_{gD\mathcal{W}-}(T)\subseteq \sigma_{lW}(T)$, $T^*$ has SVEP at every $\lambda\notin\sigma_{lW}(T)$, so a-Browder's theorem holds for $T^*$, see \cite[Theorem 4.34]{Aie2}.\\

3) Suppose that Browder's theorem holds for $T$ then $\sigma_{B}(T)=\sigma_{W}(T)$.\\
Using \cite[Theorem 3.4 and  Theorem 3.9]{CZ} we have
\begin{eqnarray*}
                                       % \nonumber to remove numbering (before each equation)
\lambda\notin \sigma_{gD}(T) & \Longleftrightarrow &  T-\lambda I \mbox{ is generalized Drazin invertible}  \\
                                         &\Longleftrightarrow &  T-\lambda I \mbox{ admits a GKD and }  \lambda\notin acc\sigma_{B}(T) \\
                                         & \Longleftrightarrow & T-\lambda I \mbox{ admits a GKD and } \lambda\notin acc\sigma_{W}(T) \\
                                         & \Longleftrightarrow & T-\lambda I \mbox{ is generalized Drazin Weyl }\\
                                         & \Longleftrightarrow & \lambda\notin \sigma_{gD\mathcal{W}}(T).
                                        \end{eqnarray*}
Hence $\sigma_{gD}(T)=\sigma_{gD\mathcal{W}}(T)$. Conversely, if  $\sigma_{gD}(T)=\sigma_{gD\mathcal{W}}(T)$, from  Proposition \ref{acvvv},   $T$ and $T^*$ has SVEP at every $\lambda\notin\sigma_{gD\mathcal{W}}(T)$. Since $\sigma_{gD\mathcal{W}}(T)\subseteq \sigma_{W}(T)$, $T$ has SVEP at every $\lambda\notin\sigma_{W}(T)$, so Browder's theorem holds for $T$, see \cite[Theorem 4.23]{Aie2}.\\
\end{proof}
It will be said that generalized Browder's theorem holds for $T\in \mathcal{B}(X)$ if $\sigma_{BW}(T)=\sigma(T)\backslash \Pi(T)$, equivalently,
$\sigma_{BW}(T)=\sigma_D(T)$ \cite{Aiee}, where $\Pi(T)$ is  the set of all poles of the resolvent of $T$.
A classical  result of the second author and H. Zguitti  \cite[Theorem 2.1]{AZ1} shows  that Browder's theorem and generalized Browder's theorem are equivalent.  According to  the previous results and the equivalent between  Browder's theorem and generalized Browder's theorem  \cite[Theorem 2.1]{AZ1} we have the following theorem.
\begin{theorem}
Let $T\in \mathcal{B}(X)$. The statements are equivalent: \\
1) Browder's theorem holds for $T$;\\
2) generalized Browder's theorem holds for $T$;\\
3) $T$ and $T^µ$  have SVEP at every $\lambda\notin\sigma_{gD\mathcal{W}}(T)$;\\
4) $\sigma_{gD}(T)=\sigma_{gD\mathcal{W}}(T)$.
\end{theorem}
In the same way we have the following result.

\begin{theorem}
Let $T\in \mathcal{B}(X)$. The statements are equivalent: \\
1) a-Browder's theorem holds for $T$;\\
2) generalized a-Browder's theorem holds for $T$;\\
3) $T$ has SVEP at every $\lambda\notin\sigma_{gD\mathcal{W}+}(T)$;\\
4) $\sigma_{gD\mathcal{M}}(T)=\sigma_{gD\mathcal{W+}}(T)$.
\end{theorem}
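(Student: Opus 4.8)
The plan is to assemble the four equivalences entirely from results already established, mirroring the proof of the preceding theorem for the non-approximate case. The key observation is that statements (1), (3) and (4) are already linked by the tools developed earlier in this section, while (2) attaches to this chain through the classical equivalence between a-Browder's theorem and its generalized counterpart.

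First I would record that the equivalence (1) $\Longleftrightarrow$ (4) is precisely part 1) of Theorem \ref{rrr}: a-Browder's theorem holds for $T$ if and only if $\sigma_{gD\mathcal{M}}(T)=\sigma_{gD\mathcal{W}+}(T)$. Next, the equivalence (3) $\Longleftrightarrow$ (4) is exactly the content of Proposition \ref{acv}, which asserts that $\sigma_{gD\mathcal{M}}(T)=\sigma_{gD\mathcal{W}+}(T)$ holds if and only if $T$ has SVEP at every $\lambda\notin\sigma_{gD\mathcal{W}+}(T)$. Chaining these two facts yields (1) $\Longleftrightarrow$ (3) $\Longleftrightarrow$ (4) with no additional argument, so that three of the four statements are mutually equivalent purely by citation.

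The only genuinely new ingredient is the equivalence (1) $\Longleftrightarrow$ (2). Here I would invoke the approximate-point analogue of the result of the second author and H. Zguitti \cite[Theorem 2.1]{AZ1}, namely the well-known fact that a-Browder's theorem holds for $T$ if and only if generalized a-Browder's theorem holds for $T$. Adjoining this to the previous chain closes the cycle and delivers all four equivalences at once, exactly as in the previous theorem where the non-approximate version of \cite[Theorem 2.1]{AZ1} played the same role for (1) $\Longleftrightarrow$ (2).

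The main obstacle is a mild bookkeeping one rather than a conceptual difficulty: ensuring that the equivalence between a-Browder's theorem and its generalized version is attributed to the correct source, since the excerpt states only the non-approximate version explicitly through \cite[Theorem 2.1]{AZ1}. Once that equivalence is in hand, the theorem reduces to a direct concatenation of Theorem \ref{rrr}, Proposition \ref{acv}, and this classical fact, and no separate spectral computation or construction of a generalized Kato decomposition is required.
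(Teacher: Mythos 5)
Your proposal is correct and follows exactly the route the paper intends: the paper states this theorem with only the remark ``In the same way we have the following result,'' meaning the concatenation of Theorem \ref{rrr} part 1), Proposition \ref{acv}, and the equivalence of a-Browder's theorem with its generalized version (the approximate-point counterpart of \cite[Theorem 2.1]{AZ1}). Your bookkeeping concern about attribution is reasonable but minor, since that equivalence is indeed established in the cited Amouch--Zguitti reference.
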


We denote by $\sigma_{lf}(T)$ and $\sigma_{uf}(T)$, $T\in\mathcal{B}(X)$,  respectively the lower and upper semi-Fredholm spectra.
Concerning the pseudo upper/lower B-Fredholm spectrum and the generalized Drazin bounded below/surjective spectrum, we have the following characterization. Note that $\sigma_{puBF}(T)\subset\sigma_{gD\mathcal{M}}(T)$,  $\sigma_{plBF}(T)\subset\sigma_{gD\mathcal{Q}}(T)$ and $\sigma_{pBF}(T)\subset\sigma_{gD}(T)$ are strict \cite{CZ}.
\begin{theorem}\label{eee}
Let $T\in \mathcal{B}(X)$. The statements are equivalent: \\
1) $\sigma_{uf}(T)=\sigma_{uB}(T)$;\\
2) $T$ has SVEP at every $\lambda\notin\sigma_{uf}(T)$;\\
3) $T$  has SVEP at every $\lambda\notin\sigma_{puBF}(T)$;\\
4) $\sigma_{gD\mathcal{M}}(T)=\sigma_{puBF}(T)$.
\end{theorem}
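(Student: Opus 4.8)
The plan is to exploit two inclusions that hold for every operator, namely $\sigma_{puBF}(T)\subseteq\sigma_{uf}(T)$ (because every upper semi-Fredholm operator is pseudo upper B-Fredholm, taking the quasi-nilpotent summand to be $\{0\}$) and $\sigma_{puBF}(T)\subseteq\sigma_{gD\mathcal{M}}(T)$ (because bounded below implies upper semi-Fredholm, so generalized Drazin bounded below implies pseudo upper B-Fredholm, as already noted before the statement). I would then prove $(1)\Leftrightarrow(2)$ separately and close the cycle $(2)\Rightarrow(4)\Rightarrow(3)\Rightarrow(2)$. The implication $(3)\Rightarrow(2)$ is immediate from the first inclusion: since $\mathbb{C}\setminus\sigma_{uf}(T)\subseteq\mathbb{C}\setminus\sigma_{puBF}(T)$, requiring SVEP on the larger complement forces SVEP on the smaller one.

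For $(1)\Leftrightarrow(2)$ I would use that $\sigma_{uf}(T)\subseteq\sigma_{uB}(T)$ always holds, so that the equality in $(1)$ is equivalent to the statement that every $\lambda\notin\sigma_{uf}(T)$ has $a(T-\lambda I)<\infty$. Since for an upper semi-Fredholm operator $T-\lambda I$ the finiteness of the ascent is equivalent to $T$ having SVEP at $\lambda$ (see \cite{Aie2}, together with the implication $a(T)<\infty\Rightarrow$ SVEP at $0$ recorded in the preliminaries), this is exactly $(2)$.

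The substantive step is $(2)\Rightarrow(4)$. Given $\lambda\notin\sigma_{puBF}(T)$, write $(X_1,X_2)\in Red(T)$ with $(T-\lambda I)_{|X_1}$ upper semi-Fredholm and $(T-\lambda I)_{|X_2}$ quasi-nilpotent. First I would set up a punctured-neighborhood argument: for $\mu\neq\lambda$ close to $\lambda$, $(T-\mu I)_{|X_1}$ remains upper semi-Fredholm by openness of the semi-Fredholm region, while $(T-\mu I)_{|X_2}$ is invertible because the spectrum of a quasi-nilpotent operator is $\{0\}$; hence $T-\mu I$ is upper semi-Fredholm on some deleted disc, so $\lambda\notin acc\,\sigma_{uf}(T)$. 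By $(2)$, $T$ has SVEP at each such $\mu$, whence $a(T-\mu I)<\infty$ and $\mu\notin\sigma_{uB}(T)$, giving $\lambda\notin acc\,\sigma_{uB}(T)$. Next, because $(T-\lambda I)_{|X_1}$ is semi-Fredholm it is of Kato type and hence admits a GKD; combining its semi-regular part with the nilpotent part and the quasi-nilpotent summand on $X_2$ shows that $T-\lambda I$ itself admits a GKD. Invoking the characterization \cite[Theorem 3.6]{CZ} exactly as in Theorem \ref{rrr}, a GKD together with $\lambda\notin acc\,\sigma_{uB}(T)$ forces $T-\lambda I$ to be generalized Drazin bounded below, i.e. $\lambda\notin\sigma_{gD\mathcal{M}}(T)$. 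This yields $\sigma_{gD\mathcal{M}}(T)\subseteq\sigma_{puBF}(T)$, and with the second inclusion above we obtain the equality $(4)$.

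Finally $(4)\Rightarrow(3)$ follows the pattern of Proposition \ref{acv}: if $\lambda\notin\sigma_{puBF}(T)=\sigma_{gD\mathcal{M}}(T)$, then $T-\lambda I$ is generalized Drazin bounded below, so $H_0(T-\lambda I)$ is closed, and \cite[Theorem 1.7]{AP} delivers SVEP at $\lambda$. The main obstacle is the $(2)\Rightarrow(4)$ step: one must carefully verify that pseudo upper B-Fredholmness at $\lambda$ propagates to genuine upper semi-Fredholmness on a full deleted neighborhood (so that both $acc\,\sigma_{uf}(T)$ and $acc\,\sigma_{uB}(T)$ can be controlled) and that the Kato-type decomposition of the semi-Fredholm summand assembles correctly with the other summands into a GKD; once these are in place, the \cite{CZ} characterization supplies the conclusion.
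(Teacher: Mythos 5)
Your proposal is correct, and it is logically complete: the cycle $(1)\Leftrightarrow(2)$, $(2)\Rightarrow(4)\Rightarrow(3)\Rightarrow(2)$ covers all four statements, and each link holds. Two of your links coincide with the paper's proof: the equivalence $(1)\Leftrightarrow(2)$ (finite ascent versus SVEP at points off $\sigma_{uf}(T)$, via \cite[Theorem 3.16]{Aie}) and the implication $(4)\Rightarrow(3)$ (generalized Drazin bounded below gives $H_0(T-\lambda I)$ closed, hence SVEP at $\lambda$ by \cite[Theorem 1.7]{AP}). Where you genuinely differ is in how the SVEP hypothesis is converted into statement $(4)$. The paper proves $(3)\Rightarrow(4)$ by a purely local argument: from the decomposition of a pseudo upper B-Fredholm operator it extracts a semi-regular summand and uses the standard fact that a semi-regular operator with SVEP at $0$ is bounded below, which directly exhibits $T-\lambda I$ as generalized Drazin bounded below; it then proves $(1)\Leftrightarrow(4)$ separately through the chain of characterizations of \cite[Theorems 3.4 and 3.6]{CZ} (GKD plus non-accumulation of $\sigma_{uf}$, respectively $\sigma_{uB}$). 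Your $(2)\Rightarrow(4)$ merges these two mechanisms: you establish $\lambda\notin acc\,\sigma_{uf}(T)$ by hand (openness of the upper semi-Fredholm class on $X_1$, invertibility of the perturbed quasi-nilpotent on $X_2$), upgrade it to $\lambda\notin acc\,\sigma_{uB}(T)$ using SVEP at the nearby points, assemble the GKD from the Kato decomposition of the semi-Fredholm summand, and then invoke \cite[Theorem 3.6]{CZ}. The trade-off is that your route avoids the lemma ``semi-regular $+$ SVEP at $0$ $\Rightarrow$ bounded below'' on which the paper's $(3)\Rightarrow(4)$ rests, but it consumes the SVEP hypothesis on a whole punctured disc rather than only at $\lambda$, which is why you must start from $(2)$ instead of $(3)$ and close the cycle with the inclusion-based implication $(3)\Rightarrow(2)$ (a step the paper never needs, since $\sigma_{puBF}(T)\subseteq\sigma_{uf}(T)$ enters its argument only at the very end of $(4)\Rightarrow(1)$). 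A further small merit of your write-up is that it makes explicit the point the paper leaves implicit, namely that the Kato-type decomposition of the upper semi-Fredholm part combines with the quasi-nilpotent summand to produce a GKD with a semi-regular, rather than merely semi-Fredholm, first component.
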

\begin{proof}
$1)\Longleftrightarrow 2)$:
Suppose that $T$  has SVEP at every $\lambda\notin\sigma_{uf}(T)$. If  $\lambda\notin \sigma_{uf}(T)$,  $T-\lambda I$ is
upper semi-Fredholm.  $T$ has  SVEP at $\lambda$, then  $a(T-\lambda I)<\infty$, see \cite[Theorem 3.16]{Aie}. So $\lambda\notin \sigma_{uB}(T)$.
Now,  Suppose that  $\sigma_{uf}(T)=\sigma_{uB}(T)$. Let $\lambda\notin\sigma_{uf}(T)$,   $\lambda\notin\sigma_{uB}(T)$ then
$a(T-\lambda I)<\infty$,  hence  $T$  has SVEP at $\lambda$ by \cite{Aie}.

$3)\Longleftrightarrow 4)$:
Suppose that $T$  has SVEP at every $\lambda\notin\sigma_{puBF}(T)$. If  $\lambda\notin\sigma_{puBF}(T)$, $T-\lambda I$ is
pseudo upper B-Fredholm, then there exists $(M,N)\in Red(T)$ such that $(T-\lambda I)_{|M}$ is semi-regular  and  $(T-\lambda I)_{|N}$ is quasinilpotent. $T$  has SVEP at every $\lambda\notin\sigma_{puBF}(T)$ implies $(T-\lambda I)_{|M}$ has the SVEP at $0$, it follows that  $(T-\lambda I)_{|M}$ is bounded below. Hence  $T-\lambda I$ is generalized Drazin bounded below, $\lambda\notin \sigma_{gD\mathcal{M}}(T)$, and since the reverse implication  holds for every operator we conclude that  $\sigma_{gD\mathcal{M}}(T)=\sigma_{puBF}(T)$. Conversely, assume that   $\sigma_{gD\mathcal{M}}(T)=\sigma_{puBF}(T)$.  If $\lambda\notin\sigma_{puBF}(T)$ then $T-\lambda I$ is generalized Drazin bounded below so $H_0(T-\lambda I)$ is closed. By \cite[Theorem 1.7]{AP},  $T$ has the SVEP at $\lambda$.

$1)\Longleftrightarrow 4)$:  Suppose that  $\sigma_{uf}(T)=\sigma_{uB}(T)$.\\
According to  \cite[Theorem 3.6 and  Theorem 3.4]{CZ} we have
\begin{eqnarray*}
                                       % \nonumber to remove numbering (before each equation)
\lambda\notin \sigma_{gD\mathcal{M}}(T) & \Longleftrightarrow &  T-\lambda I \mbox{ is generalized Drazin bounded below}  \\
                                         &\Longleftrightarrow &  T-\lambda I \mbox{ admits a GKD and }  \lambda\notin acc\sigma_{uB}(T) \\
                                         & \Longleftrightarrow & T-\lambda I \mbox{ admits a GKD and } \lambda\notin acc\sigma_{uf}(T) \\
                                         & \Longleftrightarrow & T-\lambda I \mbox{ is pseudo upper B-Fredholm }\\
                                         & \Longleftrightarrow & \lambda\notin \sigma_{puBF}(T).
                                        \end{eqnarray*}
Hence $\sigma_{gD\mathcal{M}}(T)=\sigma_{puBF}(T)$. Conversely, if  $\sigma_{gD\mathcal{M}}(T)=\sigma_{puBF}(T)$, then by  $3)\Longleftrightarrow 4)$, $T$ has SVEP at every $\lambda\notin\sigma_{puBF}(T)$. Since $\sigma_{puBF}(T)\subseteq \sigma_{uf}(T)$, $T$ has SVEP at every $\lambda\notin\sigma_{uf}(T)$,  $1)\Longleftrightarrow 2)$  gives the result.

\end{proof}

\begin{theorem}\label{fff}
Let $T\in \mathcal{B}(X)$. The statements are equivalent: \\
1) $\sigma_{lf}(T)=\sigma_{lB}(T)$;\\
2) $T^*$ has SVEP at every $\lambda\notin\sigma_{lf}(T)$;\\
3) $T^*$  has SVEP at every $\lambda\notin\sigma_{plBF}(T)$;\\
4) $\sigma_{gD\mathcal{Q}}(T)=\sigma_{plBF}(T)$.
\end{theorem}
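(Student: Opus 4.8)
The plan is to reproduce the cycle of equivalences from the proof of Theorem \ref{eee} in dual form, replacing throughout the upper/bounded-below/$T$-SVEP data by the lower/surjective/$T^*$-SVEP data. Concretely I would establish $1)\Leftrightarrow 2)$, then $3)\Leftrightarrow 4)$, and finally $1)\Leftrightarrow 4)$, which closes the loop. For $1)\Leftrightarrow 2)$ I would exploit the duality between descent and the SVEP of the adjoint. If $\lambda\notin\sigma_{lf}(T)$ then $T-\lambda I$ is lower semi-Fredholm, so $(T-\lambda I)^*$ is upper semi-Fredholm with $a((T-\lambda I)^*)=d(T-\lambda I)$; applying the implication ``$T^*$ has SVEP at $\lambda$ $\Rightarrow$ finite ascent of the adjoint'' of \cite[Theorem 3.16]{Aie} to $T^*$ shows that $T^*$ having SVEP at $\lambda$ is equivalent to $d(T-\lambda I)<\infty$, i.e.\ to $\lambda\notin\sigma_{lB}(T)$. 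Feeding this equivalence into the equality of the two spectra gives $1)\Leftrightarrow 2)$ exactly as in the upper case.

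For $3)\Leftrightarrow 4)$ I would mimic Proposition \ref{acvv}. If $\lambda\notin\sigma_{plBF}(T)$ then $T-\lambda I$ is pseudo lower B-Fredholm; since lower semi-Fredholm operators are of Kato type, the pseudo lower B-Fredholm operator $T-\lambda I$ admits a GKD, so there is $(M,N)\in Red(T)$ with $(T-\lambda I)_{|M}$ semi-regular and $(T-\lambda I)_{|N}$ quasi-nilpotent. Under the hypothesis of $3)$ the restriction $(T-\lambda I)^*_{|M}$ has SVEP at $0$, and for a semi-regular operator this forces $(T-\lambda I)_{|M}$ to be surjective; hence $T-\lambda I$ is generalized Drazin surjective and $\lambda\notin\sigma_{gD\mathcal{Q}}(T)$. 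Since the inclusion $\sigma_{plBF}(T)\subseteq\sigma_{gD\mathcal{Q}}(T)$ holds for every operator, this yields $4)$. Conversely, if $\sigma_{gD\mathcal{Q}}(T)=\sigma_{plBF}(T)$ and $\lambda\notin\sigma_{plBF}(T)$, then $T-\lambda I$ is generalized Drazin surjective, so $K(T-\lambda I)$ is closed and complemented by a subspace contained in $H_0(T-\lambda I)$; thus $K(T-\lambda I)+H_0(T-\lambda I)=X$ and \cite[Theorem 1.7]{AP} gives SVEP of $T^*$ at $\lambda$.

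For $1)\Leftrightarrow 4)$ I would assume $\sigma_{lf}(T)=\sigma_{lB}(T)$, whence $acc\sigma_{lf}(T)=acc\sigma_{lB}(T)$, and run the following chain via \cite[Theorem 3.7 and Theorem 3.4]{CZ}:
\begin{eqnarray*}
\lambda\notin \sigma_{gD\mathcal{Q}}(T) & \Longleftrightarrow & T-\lambda I \mbox{ is generalized Drazin surjective} \\
& \Longleftrightarrow & T-\lambda I \mbox{ admits a GKD and } \lambda\notin acc\sigma_{lB}(T) \\
& \Longleftrightarrow & T-\lambda I \mbox{ admits a GKD and } \lambda\notin acc\sigma_{lf}(T) \\
& \Longleftrightarrow & T-\lambda I \mbox{ is pseudo lower B-Fredholm} \\
& \Longleftrightarrow & \lambda\notin \sigma_{plBF}(T),
\end{eqnarray*}
so $\sigma_{gD\mathcal{Q}}(T)=\sigma_{plBF}(T)$. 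For the converse, $\sigma_{gD\mathcal{Q}}(T)=\sigma_{plBF}(T)$ gives, by $3)\Leftrightarrow 4)$, that $T^*$ has SVEP at every $\lambda\notin\sigma_{plBF}(T)$; since $\sigma_{plBF}(T)\subseteq\sigma_{lf}(T)$, the same holds off $\sigma_{lf}(T)$, and $1)\Leftrightarrow 2)$ returns $\sigma_{lf}(T)=\sigma_{lB}(T)$.

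The main obstacle will be the genuinely new input in $1)\Leftrightarrow 2)$: namely that for a lower semi-Fredholm operator the SVEP of the adjoint is equivalent to finite descent, and not merely implied by it. Extracting this requires passing carefully to $T^*$ and keeping track of the identities $a((T-\lambda I)^*)=d(T-\lambda I)$, $\sigma_{uf}(T^*)=\sigma_{lf}(T)$ and $\sigma_{uB}(T^*)=\sigma_{lB}(T)$; some care is needed because $X$ is not assumed reflexive, so one should argue directly on $T^*$ rather than appeal to $T^{**}=T$.
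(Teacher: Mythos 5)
Your proposal is correct, and its skeleton is the paper's own: the cycle $1)\Leftrightarrow 2)$, $3)\Leftrightarrow 4)$, $1)\Leftrightarrow 4)$, with the chain of equivalences via \cite[Theorems 3.4 and 3.7]{CZ} for $1)\Rightarrow 4)$ and the argument $K(T-\lambda I)+H_0(T-\lambda I)=X$ plus \cite[Theorem 1.7]{AP} for $4)\Rightarrow 3)$ appearing verbatim in the paper. You deviate in two sub-steps. First, for $1)\Leftrightarrow 2)$ the paper simply cites \cite[Theorem 3.17]{Aie} (lower semi-Fredholm together with SVEP of $T^*$ at $\lambda$ forces $d(T-\lambda I)<\infty$) and the standard implication $d(T-\lambda I)<\infty\Rightarrow T^*$ has SVEP at $\lambda$; you instead re-derive Theorem 3.17 from its ascent counterpart \cite[Theorem 3.16]{Aie} applied to $T^*$, via $a((T-\lambda I)^*)=d(T-\lambda I)$. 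This is legitimate --- every power of a lower semi-Fredholm operator is lower semi-Fredholm, hence has closed range, so annihilator duality turns equality of kernels of adjoint powers into equality of ranges --- but it is extra work for a fact already on the shelf. Second, for $3)\Rightarrow 4)$ you dualize the decomposition argument of Theorem \ref{eee} and Proposition \ref{acv}: refine the pseudo lower B-Fredholm decomposition to a GKD and invoke ``semi-regular with adjoint having SVEP at $0$ implies surjective''. The paper argues differently, through accumulation points: by \cite[Theorem 3.4]{CZ}, $\lambda\notin\sigma_{plBF}(T)$ means $T-\lambda I$ has a GKD and $\lambda\notin acc\,\sigma_{lf}(T)$; the SVEP hypothesis off $\sigma_{plBF}(T)$ gives it off $\sigma_{lf}(T)$, hence $\sigma_{lB}(T)=\sigma_{lf}(T)$, so $\lambda\notin acc\,\sigma_{lB}(T)$ and \cite[Theorem 3.7]{CZ} yields generalized Drazin surjectivity. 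Your route is more symmetric with the upper case and more self-contained, but it requires care with the object you denote $(T-\lambda I)^*_{|M}$, which as written is meaningless (the adjoint acts on $X^*$, not on $M\subseteq X$); what you need is the identification $((T-\lambda I)_{|M})^*\cong (T^*-\lambda I)_{|N^{\perp}}$, so that SVEP of $T^*$ at $\lambda$ passes to this restriction. The paper's route avoids adjoints of restrictions entirely, at the price of leaning on the characterizations of \cite{CZ}. Both arguments are sound.
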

\begin{proof}
$1)\Longleftrightarrow 2)$:
Suppose that $T^*$  has SVEP at every $\lambda\notin\sigma_{lf}(T)$.  $\lambda\notin \sigma_{lf}(T)$ implies that   $T-\lambda I$ is
lower semi-Fredholm.  $T^*$ has  SVEP at $\lambda$, then  $d(T-\lambda I)<\infty$, see \cite[Theorem 3.17]{Aie}. So $\lambda\notin \sigma_{lB}(T)$.
Now,  Suppose that  $\sigma_{lf}(T)=\sigma_{lB}(T)$. Let $\lambda\notin\sigma_{lf}(T)$,   $\lambda\notin\sigma_{lB}(T)$ then
$d(T-\lambda I)<\infty$,  hence  $T^*$  has SVEP at $\lambda$ by \cite{Aie}.

$3)\Longleftrightarrow 4)$:
Suppose that $T^*$ has SVEP at every $\lambda\notin\sigma_{plBF}(T)$. If  $\lambda\notin \sigma_{plBF}(T)$,  $T-\lambda I$ admits GKD
and $\lambda\notin acc\sigma_{lf}(T)$ by \cite[Theorem 3.4]{CZ}. $T^*$ has SVEP  at every $\lambda\notin\sigma_{plBF}(T)$, it follows that  $T^*$ has SVEP  at every $\lambda\notin\sigma_{lf}(T)$, then $\sigma_{lB}(T)=\sigma_{lf}(T)$ so  $\lambda\notin acc\sigma_{lB}(T)$. Therefore, $T-\lambda I$ is generalized Drazin surjective \cite[Theorem 3.7]{CZ}, $\lambda\notin\sigma_{gD\mathcal{Q}}(T)$ and since the reverse implication  holds for every operator we conclude that  $\sigma_{gD\mathcal{Q}}(T)=\sigma_{plBF}(T)$. Conversely, suppose that   $\sigma_{gD\mathcal{Q}}(T)=\sigma_{plBF}(T)$,  if
$\lambda\notin\sigma_{plBF}(T)$ then $T-\lambda I$ is generalized Drazin surjective then  $K(T-\lambda I)$ is closed and complemented with a subspace $N$ in $X$ such that $ N\subseteq H_0(T-\lambda I)$  and $(K(T-\lambda I), N)\in Red(T-\lambda I)$, so $K(T-\lambda I)+H_0(T-\lambda I)=X$. From \cite[Theorem 1.7]{AP},  $T^*$ has  SVEP at $\lambda$.

$1)\Longleftrightarrow 4)$:  Suppose that  $\sigma_{lf}(T)=\sigma_{lB}(T)$.\\
According to  \cite[Theorem 3.7 and  Theorem 3.4]{CZ} we have
\begin{eqnarray*}
                                       % \nonumber to remove numbering (before each equation)
\lambda\notin \sigma_{gD\mathcal{Q}}(T) & \Longleftrightarrow &  T-\lambda I \mbox{ is generalized Drazin surjective}  \\
                                         &\Longleftrightarrow &  T-\lambda I \mbox{ admits a GKD and }  \lambda\notin acc\sigma_{lB}(T) \\
                                         & \Longleftrightarrow & T-\lambda I \mbox{ admits a GKD and } \lambda\notin acc\sigma_{lf}(T) \\
                                         & \Longleftrightarrow & T-\lambda I \mbox{ is pseudo lower B-Fredholm }\\
                                         & \Longleftrightarrow & \lambda\notin \sigma_{plBF}(T).
                                        \end{eqnarray*}
Hence $\sigma_{gD\mathcal{Q}}(T)=\sigma_{plBF}(T)$. Conversely, if  $\sigma_{gD\mathcal{Q}}(T)=\sigma_{plBF}(T)$, by  $3)\Longleftrightarrow 4)$, $T^*$ has SVEP at every $\lambda\notin\sigma_{plBF}(T)$. Since $\sigma_{plBF}(T)\subseteq \sigma_{lf}(T)$, $T$ has SVEP at every $\lambda\notin\sigma_{lf}(T)$, according to $1)\Longleftrightarrow 2)$ we obtain the result.
\end{proof}
As a direct consequence of the Theorem \ref{eee} and  Theorem \ref{fff} we have the following corollary.

\begin{corollary}\label{dfg}
Let $T\in \mathcal{B}(X)$. The statements are equivalent: \\
1) $\sigma_{e}(T)=\sigma_{B}(T)$;\\
2) $T$ and $T^*$ have SVEP at every $\lambda\notin\sigma_{e}(T)$;\\
3) $T$ and $T^*$ have SVEP at every $\lambda\notin\sigma_{BF}(T)$;\\
4) $\sigma_{BF}(T)=\sigma_{D}(T)$;\\
5) $T$ and  $T^*$  have SVEP at every $\lambda\notin\sigma_{pBF}(T)$;\\
6) $\sigma_{gD}(T)=\sigma_{pBF}(T)$.
\end{corollary}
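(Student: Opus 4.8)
The plan is to collapse the three equalities 1), 4), 6) to a single containment of accumulation points and then to match each equality with its SVEP counterpart. I would use Koliha's description $\sigma_{gD}(T)=acc\,\sigma(T)$ \cite{K}, together with the standard decompositions $\sigma_{B}(T)=\sigma_{e}(T)\cup acc\,\sigma(T)$ and $\sigma_{D}(T)=\sigma_{BF}(T)\cup acc\,\sigma(T)$, and the inclusions $\sigma_{pBF}(T)\subseteq\sigma_{BF}(T)\subseteq\sigma_{e}(T)$ and $\sigma_{pBF}(T)\subseteq\sigma_{gD}(T)$. With these, 1), 4), 6) rewrite respectively as $acc\,\sigma(T)\subseteq\sigma_{e}(T)$, $acc\,\sigma(T)\subseteq\sigma_{BF}(T)$ and $acc\,\sigma(T)\subseteq\sigma_{pBF}(T)$. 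Because $\sigma_{pBF}(T)\subseteq\sigma_{BF}(T)\subseteq\sigma_{e}(T)$, the implications $6)\Rightarrow 4)\Rightarrow 1)$ are then immediate.

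Next I would prove the three within-level equivalences $1)\Leftrightarrow 2)$, $3)\Leftrightarrow 4)$ and $5)\Leftrightarrow 6)$ by the mechanism already used in Theorems \ref{eee} and \ref{fff}, now applied to $T$ and $T^{*}$ simultaneously. For $1)\Leftrightarrow 2)$: if $T-\lambda I$ is Fredholm, then SVEP of $T$ (resp. of $T^{*}$) at $\lambda$ is equivalent to $a(T-\lambda I)<\infty$ (resp. $d(T-\lambda I)<\infty$) by \cite[Theorems 3.16 and 3.17]{Aie}, so SVEP of both forces $T-\lambda I$ to be Browder, giving $\sigma_{e}(T)=\sigma_{B}(T)$; the converse is the SVEP enjoyed by $T$ and $T^{*}$ off $\sigma_{e}(T)$ once that set coincides with $\sigma_{B}(T)$. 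For $5)\Leftrightarrow 6)$: if $\lambda\notin\sigma_{pBF}(T)$ write $T-\lambda I=F\oplus Q$ with $F$ Fredholm and $Q$ quasi-nilpotent; SVEP of $T$ and of $T^{*}$ at $\lambda$ is inherited by the completely reducing restriction $F$, whence $a(F),d(F)<\infty$, so $F$ is Browder, hence Drazin invertible, and writing $F=F_{1}\oplus F_{2}$ with $F_{1}$ invertible and $F_{2}$ nilpotent yields $T-\lambda I=F_{1}\oplus(F_{2}\oplus Q)$, i.e. generalized Drazin invertible; conversely a generalized Drazin invertible $T-\lambda I$ has $\lambda\notin acc\,\sigma(T)$, so $T$ and $T^{*}$ have SVEP there. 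The middle equivalence $3)\Leftrightarrow 4)$ is verbatim the same with ``Fredholm/Browder'' replaced by ``B-Fredholm/Drazin''. Each of these uses only the GKD splitting and the SVEP dictionary of \cite[Theorem 1.7]{AP} and \cite[Theorems 3.4--3.7]{CZ}.

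The one step that is genuinely more than a formal union of the upper and lower results is the upward implication closing the cycle, say $1)\Rightarrow 6)$, i.e. $acc\,\sigma(T)\subseteq\sigma_{e}(T)\Rightarrow acc\,\sigma(T)\subseteq\sigma_{pBF}(T)$; I expect this to be the main obstacle, since passing from SVEP off $\sigma_{e}(T)$ to SVEP off the smaller set $\sigma_{pBF}(T)$ enlarges the set of points at which SVEP is demanded, so the two hypotheses are not interchangeable by bookkeeping alone. To handle it I would argue by contradiction: suppose $\lambda\in acc\,\sigma(T)$ while $\lambda\notin\sigma_{pBF}(T)$, and split $T-\lambda I=F\oplus Q$ with $F$ Fredholm and $Q$ quasi-nilpotent. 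Since the $Q$-summand contributes only $\lambda$ to the spectrum near $\lambda$, the accumulation forces $0\in acc\,\sigma(F)$; the punctured neighbourhood theorem for the Fredholm operator $F$ then yields a punctured disc around $0$ on which $F-\mu$ is Fredholm and non-invertible, so that a whole punctured disc around $\lambda$ lies in $\sigma(T)$ and consists of Fredholm points of $T$ (because $Q+(\lambda-\mu)I$ is invertible there). These points belong to $acc\,\sigma(T)\setminus\sigma_{e}(T)$, contradicting 1); hence $\lambda\in\sigma_{pBF}(T)$ and 6) holds. Combining this single upward implication with the trivial chain $6)\Rightarrow 4)\Rightarrow 1)$ and the three within-level equivalences closes the loop and establishes the equivalence of all six statements.
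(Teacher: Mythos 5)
Your proof is correct, but it takes a genuinely different route from the paper, which offers no argument at all: the corollary is simply asserted to be ``a direct consequence of Theorem \ref{eee} and Theorem \ref{fff}''. Read literally, as a conjunction of the one-sided theorems, that assertion is actually untenable, because the items of the corollary do not split into upper and lower halves: for $T=A\oplus L$, where $A$ is an infinite direct sum of unilateral right shifts (an isometry whose range has infinite codimension) and $L$ is the left shift, one has $\sigma_{e}(T)=\sigma_{B}(T)$ (both equal the closed unit disc), while $\sigma_{uf}(T)$ is the unit circle and $\sigma_{uB}(T)$ is the closed disc; so item 1) of the corollary holds although item 1) of Theorem \ref{eee} fails. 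Your argument supplies exactly what this forces one to supply: (i) the reduction of 1), 4), 6) to the inclusions $acc\,\sigma(T)\subseteq\sigma_{e}(T)$, $acc\,\sigma(T)\subseteq\sigma_{BF}(T)$, $acc\,\sigma(T)\subseteq\sigma_{pBF}(T)$ via Koliha's identity $\sigma_{gD}(T)=acc\,\sigma(T)$ and the decompositions $\sigma_{B}(T)=\sigma_{e}(T)\cup acc\,\sigma(T)$, $\sigma_{D}(T)=\sigma_{BF}(T)\cup acc\,\sigma(T)$; (ii) the within-level equivalences $1)\Leftrightarrow 2)$, $3)\Leftrightarrow 4)$, $5)\Leftrightarrow 6)$ by the same SVEP--ascent/descent dictionary used in Theorems \ref{eee} and \ref{fff}; and (iii) the one cross-level implication $1)\Rightarrow 6)$, which you prove by a punctured-neighbourhood argument on the Fredholm summand of the splitting $T-\lambda I=F\oplus Q$. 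The paper's implicit route for step (iii) would instead be the two-sided analogue of the chains in Theorems \ref{rrr}, \ref{eee}, \ref{fff}, i.e.\ the characterizations of \cite{CZ} (``GKD and $\lambda\notin acc\,\sigma_{B}(T)$'' versus ``GKD and $\lambda\notin acc\,\sigma_{e}(T)$''); your route is more elementary and self-contained, needing only Koliha's theorem \cite{K}, Berkani's decomposition of B-Fredholm operators, Aiena's Theorems 3.16/3.17, and the classical punctured neighbourhood theorem, at the modest cost of verifying the (standard but not entirely free) identity $\sigma_{D}(T)=\sigma_{BF}(T)\cup acc\,\sigma(T)$. Both approaches are sound; yours has the merit of isolating and honestly proving the single step that is not bookkeeping, which the paper's one-line justification obscures.
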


\section{Perturbations}
%%%%%%%%%%%%%%%%%%%%%%%%%%%%%%%%%%%%%%%%%%%%%%%%%%%%%%%%%%%%%%%%%%%%%%%%%
%%%%%%%%%%%%%%%%%%%%%%%%%%%%%%%%%%%%%%%%%%%%%%%%%%%% Small commuting perturbation %%%%%%%%%%%%%%%%%%%%%%%%%%%%
%%%%%%%%%%%%%%%%%%%%%%%%%%%%%%%%%%%%%%%%%%%%%%%%%%%%%%%%%%%%%%%%%%%%%%%%%%%%%%%%%%%%%%%%%%%%%%%%%%%%%%%%%%%%%%%%%%
Now, we consider the classes of operators introduced in \cite{CZ}:
$$gDR :=\{ T\in\mathcal{B}(X); \mbox{ there exists } (M,N)\in Red(T) \mbox{ such that } T_{|M}\in R \mbox{ and } T_{|N} \mbox{ is quasinilpotent }\}.$$
$$DR  :=\{ T\in\mathcal{B}(X); \mbox{ there exists } (M,N)\in Red(T) \mbox{ such that } T_{|M}\in R \mbox{ and } T_{|N} \mbox{ is nilpotent }\}.$$
Where $R$ denote any of the following classes:  bounded below/surjective operators,   upper(lower) semi-Fredholm operators, Fredholm operator, upper(lower) semi-Weyl operators.
\begin{proposition}
Let $T\in \mathcal{B}(X)$. If $T\in gDR$, then there exists $\alpha> 0$  such that for every $S\in\mathcal{B}(X)$ invertible operator,
satisfying $ST=TS$ and  $||S||<\alpha$,  we have   $T-S\in DR$.
\end{proposition}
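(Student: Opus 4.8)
The plan is to establish the stronger conclusion that $T-S$ already lies in $R$; this gives $T-S\in DR$ at once, via the trivial reducing pair $(X,\{0\})$ (on which the nilpotent part is the zero operator). From $T\in gDR$ I fix $(M,N)\in Red(T)$ with $T_{\shortmid M}\in R$ and $T_{\shortmid N}$ quasinilpotent, and let $P$ be the projection onto $N$ along $M$, so that $PT=TP$. The argument rests on two points: a small invertible $S$ commuting with $T$ cannot push the regular summand out of $R$, and it converts the quasinilpotent summand into an invertible operator.

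First I would check that $N$ is invariant under both $S$ and $S^{-1}$ (note that $ST=TS$ forces $S^{-1}T=TS^{-1}$, so both lie in the commutant of $T$). Since $T_{\shortmid N}$ is quasinilpotent, $N\subseteq H_0(T)$, and $H_0(T)$ is hyperinvariant: if $RT=TR$ and $r_T(x)=0$ then $\|T^nRx\|^{1/n}\le\|R\|^{1/n}\|T^nx\|^{1/n}\to 0$, so $Rx\in H_0(T)$. In the cases where the summand exhausts the quasinilpotent part, e.g. $R=\,$bounded below (there $T_{\shortmid M}\in R$ forces $H_0(T_{\shortmid M})=\{0\}$, whence $N=H_0(T)$), this yields $S(N)=N$; the surjective case is dual, via the hyperinvariance of $K(T)$. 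Granting $S(N)=N$, write $S$ in block form against $X=M\oplus N$: it is lower triangular, with blocks $S_{11}\colon M\to M$, $S_{21}\colon M\to N$ and $S_{22}=S_{\shortmid N}$, the last invertible on $N$.

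Then $T-S$ is lower triangular with diagonal blocks $A:=T_{\shortmid M}-S_{11}$ and $D:=T_{\shortmid N}-S_{22}$. On $N$, the operator $S_{\shortmid N}^{-1}$ commutes with the quasinilpotent $T_{\shortmid N}$, so $S_{\shortmid N}^{-1}T_{\shortmid N}$ is quasinilpotent and $D=-S_{\shortmid N}\bigl(I-S_{\shortmid N}^{-1}T_{\shortmid N}\bigr)$ is invertible. Factoring $T-S=(A\oplus D)\,V$ with $V$ lower triangular and unipotent (hence invertible), and using that each class $R$ is preserved under composition with invertibles and under direct summation with an invertible block, I get $T-S\in R$ provided $A\in R$. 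Finally $\|S_{11}\|\le\|I-P\|\,\|S\|$, and each $R$ is open in $\mathcal{B}(M)$ (with locally constant index in the semi-Weyl cases); choosing $\alpha$ below the perturbation radius of $T_{\shortmid M}\in R$ divided by $\|I-P\|$ forces $A=T_{\shortmid M}-S_{11}\in R$ whenever $\|S\|<\alpha$. Hence $T-S\in R\subseteq DR$.

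The genuine obstacle is the invariance of the splitting under $S$, since a reducing pair is in general not preserved by a commuting operator. It is clearest when $0\notin\sigma(T_{\shortmid M})$: then Rosenblum's theorem annihilates the off-diagonal blocks of $S$ outright, giving $S=S_{\shortmid M}\oplus S_{\shortmid N}$. For the bounded below and surjective classes it follows from the hyperinvariance of $H_0(T)$ and $K(T)$ as above. For the Fredholm, semi-Fredholm and (semi-)Weyl classes the summand $N$ may fail to coincide with $H_0(T)$, and there I would either enlarge $(M,N)$ to the hyperinvariant decomposition or argue in the Calkin algebra, where $\pi(T)$ and $\pi(S)$ commute and the subadditivity $\sigma_e(T-S)\subseteq\sigma_e(T)-\sigma_e(S)$, combined with $\sigma_e(S)\subseteq\sigma(S)\subseteq\{0<|z|\le\|S\|\}$ and $\mathrm{dist}(0,\sigma_e(T_{\shortmid M}))>0$, keeps $0$ out of $\sigma_e(T-S)$; the index bookkeeping for the Weyl variants is then supplied by local constancy of the index. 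The rest—openness of $R$ and the quasinilpotence of a commuting product with a quasinilpotent—is routine.
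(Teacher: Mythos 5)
Your route is genuinely different from the paper's, and for part of the proposition it is correct and even stronger. The paper's own proof is purely abstract: it quotes from \cite{CZ} that $T\in gDR$ means $T$ admits a GKD with $0\notin acc\,\sigma_R(T)$, applies Bouamama's theorem \cite{WB} to get that $T-S$ is semi-regular (hence of Kato type) for every small invertible commuting $S$, transfers the accumulation condition to $T-S$, and concludes by \cite[Theorem 4.1]{CZ}; no block matrices or indices appear anywhere. Your direct analysis instead targets the stronger conclusion $T-S\in R$, and it does succeed for $R=$ bounded below (there $N=H_0(T)$ is indeed hyperinvariant, the factorization $T-S=(A\oplus D)V$ with $D$ invertible is legitimate, and openness of $R$ finishes), for $R=$ surjective (dually, via $M=K(T)$), and for $R=$ Fredholm (the commuting subadditivity argument $\sigma_e(T-S)\subseteq\sigma_e(T)-\sigma_e(S)$ in the Calkin algebra is sound, since $\sigma_e(T)\subseteq\{0\}\cup\{|z|\geq d\}$ and $\sigma_e(S)\subseteq\{0<|z|\leq \|S\|\}$).

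The genuine gap is in the semi-Weyl classes, which are among the classes $R$ the proposition explicitly covers; there your proof does not go through. First, a secondary flaw: for $R=$ upper (lower) semi-Fredholm your Calkin inequality invokes the wrong spectrum, because a semi-Fredholm $T_{\shortmid M}$ with infinite-codimensional range has $0\in\sigma_e(T_{\shortmid M})$, so $\mathrm{dist}(0,\sigma_e(T_{\shortmid M}))>0$ fails; you would need the commuting-perturbation inclusion for $\sigma_{uf}$ (resp. $\sigma_{lf}$) instead of $\sigma_e$. The serious problem is the index: ``local constancy of the index'' is not an argument. Local constancy only helps along a path of (semi-)Fredholm operators joining $T-S$ to an operator whose index is known, and no such path exists in your setup: the natural one, $t\mapsto T-tS$ on $(0,1]$, has constant index, but its endpoint $T$ is \emph{not} Fredholm when $N$ is infinite dimensional, so that constant is exactly the unknown quantity. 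Smallness of $\|S\|$ cannot rescue this, since the index is unchanged under $S\mapsto tS$: any index obstruction occurring for some invertible commuting $S$ persists for arbitrarily small ones. To see what is actually being demanded, note that in the pseudo B-Weyl case $\pi(P)$ ($P$ the projection onto $N$ along $M$) is the spectral idempotent of $\pi(T)$ for the clopen part $\sigma_e(T_{\shortmid M})$ of $\sigma_e(T)$, hence commutes with $\pi(S)$; so the corners of $S$ are compact and one computes $\mathrm{ind}(T-S)=\mathrm{ind}\bigl((I-P)S_{\shortmid N}\bigr)$, the index of the compression of $S$ to the quasinilpotent summand. Nothing in your proposal controls this integer, yet $T-S\in DR$ forces it to vanish in the Weyl case (and to have the right sign in the semi-Weyl cases), because a nilpotent direct summand of a Fredholm operator lives on a finite-dimensional space. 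This is precisely the issue the paper's route avoids by arguing through the Kato-type-plus-accumulation characterization of $DR$ rather than through any index computation.
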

\begin{proof}
If $T\in gDR$, then  $T$ admits a GKD and $0\in acc\sigma_R(T)$,  see\cite{CZ}. From  \cite[Théorème 2.1]{WB} $T-S$ is semi-regular, and since $acc\sigma_R(T-S)=acc\sigma_R(T)$, $\sigma_R(T)$ the spectrum associated to the classe $R$, then
$T$ is of Kato type and  $0\in acc\sigma_R(T-S)$. According to \cite[Theorem 4.1]{CZ},  $T-S\in DR$
\end{proof}

%%%%%%%%%%%%%%%%%%%%%%%%%%%%%%%%%%%%%%%%%%%%%%%%%%%%%%%%%%%%%%%%%%%%%%%%%%%%%%%%%%%%%%%%%%%%%%%%%%%%%%%%%%%%%%%%%%
%%%%%%%%%%%%%%%%%%%%%%%%%%%%%%%%%%%%%%%%%%%%%%%%%%%%%%%%%%%%%%%%%%%%%%%%%%%%%%%%%%%%%%%%%%%%%%%%%%%%%%%%%%%%%%%%%%%%
%%%%%%%%%%%%%%%%%%%%%%%%%%%%%%%%%%%%%%%%%%%%%%%%%%%%%%%%%%%%%%%%%%%%%%%%%%%%%%%%%%%%%%%%%%%%%%%%%%%%%%%%%%%%%%%%%%%%%%%%%%%
Let $\mathcal{F}(X)$ denote the ideal of finite rank operators on $X.$
A bounded linear operator $F\in\mathcal{B}(X)$ is power finite rank if $F^n\in\mathcal{F}(X)$ for some $n \in \mathbb{N}.$
In what follow, we will prove that  pseudo B-Weyl operators satisfying Browder's theorem  is stable by power finite rank perturbations.

\begin{proposition}
Let $T\in \mathcal{B}(X)$,  $F^n\in\mathcal{F}(X)$ for some $n \in \mathbb{N}$ commutes with $T$ then:

\begin{enumerate}
  \item  If $T$ satisfy Browder theorem, then $\sigma_{gD\mathcal{W}}(T+F)=\sigma_{gD\mathcal{W}}(T)$;
  \item  If $T$ and $T^*$ have SVEP at every $\lambda\notin\sigma_{e}(T)$, then $\sigma_{pBF}(T+F)=\sigma_{pBF}(T)$
\end{enumerate}
\end{proposition}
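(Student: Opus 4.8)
The plan is to reduce both statements to a single stability fact, namely that the generalized Drazin spectrum is invariant under commuting power finite rank perturbations,
$$\sigma_{gD}(T+F)=\sigma_{gD}(T),$$
and then to transport this equality through the characterizations established in Section 2. Throughout I use that a power finite rank operator is a Riesz operator, together with the classical invariances of the Weyl, Browder and essential spectra under a commuting Riesz perturbation, so that $\sigma_W(T+F)=\sigma_W(T)$, $\sigma_B(T+F)=\sigma_B(T)$ and $\sigma_e(T+F)=\sigma_e(T)$.

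For (1), Browder's theorem for $T$ means $\sigma_W(T)=\sigma_B(T)$, hence $\sigma_W(T+F)=\sigma_B(T+F)$ and Browder's theorem also holds for $T+F$; by part 3) of Theorem \ref{rrr} this yields $\sigma_{gD\mathcal{W}}(T)=\sigma_{gD}(T)$ and $\sigma_{gD\mathcal{W}}(T+F)=\sigma_{gD}(T+F)$. For (2), by Corollary \ref{dfg} the hypothesis is equivalent to $\sigma_e(T)=\sigma_B(T)$ and to $\sigma_{pBF}(T)=\sigma_{gD}(T)$; invariance of $\sigma_e$ and $\sigma_B$ gives $\sigma_e(T+F)=\sigma_B(T+F)$, so the hypothesis passes to $T+F$ and Corollary \ref{dfg} applied to $T+F$ gives $\sigma_{pBF}(T+F)=\sigma_{gD}(T+F)$. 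In either case the desired equality follows immediately once the displayed lemma is known.

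To prove the lemma, note that $\lambda\notin\sigma_{gD}(T)$ iff $0\notin acc\,\sigma(T-\lambda)$; after replacing $T$ by $T-\lambda$ it is enough to show that $0\notin acc\,\sigma(T)$ forces $0\notin acc\,\sigma(T+F)$, the converse implication being obtained by symmetry since $-F$ is power finite rank and commutes with $T+F$. Assuming $0$ isolated in $\sigma(T)$, let $P$ be the Riesz idempotent at $0$, giving $(X_1,X_2)\in Red(T)$ with $T_{\shortmid X_2}$ quasi-nilpotent and $T_{\shortmid X_1}$ invertible; as $F$ commutes with $T$ it commutes with $P$ and hence respects this decomposition, and each restriction of $F$ is again power finite rank. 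On $X_2$ one has $\sigma((T+F)_{\shortmid X_2})\subseteq\{0\}+\sigma(F_{\shortmid X_2})$, a finite set because a power finite rank operator has finite spectrum, so $0$ is not an accumulation point of this part. On $X_1$ I would decompose $F_{\shortmid X_1}$ by its own Riesz idempotents into its nilpotent part (where power finiteness together with spectrum $\{0\}$ forces nilpotency) and finitely many finite-dimensional generalized eigenspaces for its nonzero eigenvalues, all summands being invariant for $T_{\shortmid X_1}$ as well. On the nilpotent part the invertible operator $T_{\shortmid X_1}$ is perturbed by a commuting nilpotent, so the spectrum is unchanged and stays bounded away from $0$, while the finite-dimensional eigenspaces contribute only finitely many points. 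Hence $acc\,\sigma(T+F)$ reduces to the accumulation set of the spectrum of the invertible, nilpotently perturbed summand, which avoids $0$.

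The crux is this lemma, and inside it the treatment of the invertible summand $X_1$, where the hypothesis power finite rank — rather than merely Riesz — is essential. Indeed, for a general commuting Riesz $F$ the restriction $F_{\shortmid X_2}$ could carry infinitely many eigenvalues accumulating at $0$, making $(T+F)_{\shortmid X_2}$ a Riesz operator with $0\in acc\,\sigma$ and thereby destroying generalized Drazin invertibility; it is precisely the finiteness of the spectrum of a power finite rank operator, together with spectral invariance under a commuting nilpotent perturbation, that closes the argument.
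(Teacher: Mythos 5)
Your proposal is correct, and its skeleton coincides with the paper's: both arguments reduce the two assertions to the single invariance $\sigma_{gD}(T+F)=\sigma_{gD}(T)$ and then transport it through Theorem \ref{rrr} and Corollary \ref{dfg}. The genuine differences are two. First, the paper obtains the central fact by simply quoting \cite[Theorem 2.2]{ZZY} for $acc\,\sigma(T+F)=acc\,\sigma(T)$, whereas you prove it from scratch via the Riesz idempotent of $T$ at $0$ and the spectral decomposition of the commuting power finite rank perturbation (finite spectrum, nilpotency of its quasi-nilpotent part, finite dimensionality of the spectral subspaces of its nonzero eigenvalues). This makes the argument self-contained and isolates exactly where power-finiteness, rather than mere Riesz-ness, is used; your closing remark on this is apt and is consistent with the paper's Theorem \ref{rr}, where for general commuting Riesz perturbations the extra hypotheses $iso(\sigma_e(T))=\emptyset$, resp. $iso(\sigma_W(T))=\emptyset$, are needed. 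Second, you make explicit a step the paper's proof leaves implicit: to pass from $\sigma_{gD\mathcal{W}}(T)=\sigma_{gD}(T)$ and $\sigma_{gD}(T+F)=\sigma_{gD}(T)$ to the desired equality one must know that Browder's theorem (resp. $\sigma_e=\sigma_B$) also holds for $T+F$, since otherwise one only gets the inclusion $\sigma_{gD\mathcal{W}}(T+F)\subseteq\sigma_{gD\mathcal{W}}(T)$; you supply this transfer via the classical stability of $\sigma_W$, $\sigma_B$ and $\sigma_e$ under commuting Riesz perturbations. Two routine details remain to be recorded in your lemma: the case $0\notin\sigma(T)$ (take the Riesz idempotent $P=0$), and the observation that the restriction of the invertible operator $T_{\shortmid X_1}$ to each of the finitely many $T$-invariant direct summands coming from the spectral decomposition of $F_{\shortmid X_1}$ is again invertible (injectivity is clear, and surjectivity follows by comparing components in the direct sum), so that the spectrum of the nilpotently perturbed summand is indeed a closed set avoiding $0$.
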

\begin{proof}
$(1)$ According to \cite[ Theorem 2.2]{ZZY}, we have $acc(\sigma(T))=acc(\sigma(T+F)).$
Then $\lambda\in  \sigma_{gD}(T)$ if and only if $\lambda\in acc(\sigma(T))$
if and only if $\lambda\in acc(\sigma(T+F))$
if and only if $\lambda\in\sigma_{gD}(T+F).$ So $\sigma_{gD}(T+F)=\sigma_{gD}(T).$
The Theorem \ref{rrr} and Theorem \ref{dfg}  give the result.
\end{proof}
By the same argument we have the following proposition.
\begin{proposition}
Let $T\in \mathcal{B}(X)$ satisfy Browder theorem, $Q$ a quasi-nilpotent operator commutes with $T$ then:
 \begin{enumerate}
  \item  If $T$ satisfy Browder theorem, then $\sigma_{gD\mathcal{W}}(T+Q)=\sigma_{gD\mathcal{W}}(T)$;
  \item  If $T$ and $T^*$ have SVEP at every $\lambda\notin\sigma_{e}(T)$, then $\sigma_{pBF}(T+Q)=\sigma_{pBF}(T)$
\end{enumerate}
\end{proposition}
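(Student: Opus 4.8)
The plan is to imitate the proof of the preceding proposition, replacing the power finite rank operator $F$ by the commuting quasi-nilpotent operator $Q$, and replacing the input $acc(\sigma(T))=acc(\sigma(T+F))$ (taken there from \cite{ZZY}) by the sharper identity $\sigma(T+Q)=\sigma(T)$. First I would establish this spectral identity directly: for $\lambda\notin\sigma(T)$ the operator $(T-\lambda I)^{-1}Q$ is quasi-nilpotent, being the product of two commuting operators one of which is quasi-nilpotent, so $I+(T-\lambda I)^{-1}Q$ is invertible and hence $T+Q-\lambda I=(T-\lambda I)\bigl(I+(T-\lambda I)^{-1}Q\bigr)$ is invertible; applying the same argument to $T+Q$ together with the commuting quasi-nilpotent operator $-Q$ gives the reverse inclusion, whence $\sigma(T+Q)=\sigma(T)$ and in particular $acc(\sigma(T+Q))=acc(\sigma(T))$. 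By Koliha's characterization (\cite{K}: $\lambda\in\sigma_{gD}(S)$ iff $\lambda\in acc(\sigma(S))$) this yields $\sigma_{gD}(T+Q)=\sigma_{gD}(T)$.

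For part $(1)$, the delicate point is that Theorem \ref{rrr}$(3)$ converts $\sigma_{gD}$ into $\sigma_{gD\mathcal{W}}$ only under Browder's theorem, so I must check that Browder's theorem holds for $T+Q$ as well, not merely for $T$. Since a quasi-nilpotent operator is Riesz and commutes with $T$, I would invoke the classical invariance of the Weyl and Browder spectra under commuting Riesz perturbations to obtain $\sigma_W(T+Q)=\sigma_W(T)$ and $\sigma_B(T+Q)=\sigma_B(T)$; combined with $\sigma_W(T)=\sigma_B(T)$ (Browder's theorem for $T$) this gives $\sigma_W(T+Q)=\sigma_B(T+Q)$, i.e. Browder's theorem for $T+Q$. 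Applying Theorem \ref{rrr}$(3)$ to both operators then yields $\sigma_{gD\mathcal{W}}(T+Q)=\sigma_{gD}(T+Q)=\sigma_{gD}(T)=\sigma_{gD\mathcal{W}}(T)$.

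For part $(2)$, the hypothesis that $T$ and $T^*$ have SVEP at every $\lambda\notin\sigma_e(T)$ is, by Corollary \ref{dfg}, equivalent both to $\sigma_e(T)=\sigma_B(T)$ and to $\sigma_{gD}(T)=\sigma_{pBF}(T)$. Using once more the invariance under commuting Riesz perturbations of the essential spectrum and of the Browder spectrum, I would deduce $\sigma_e(T+Q)=\sigma_e(T)=\sigma_B(T)=\sigma_B(T+Q)$, so the hypothesis transfers to $T+Q$ and Corollary \ref{dfg} gives $\sigma_{gD}(T+Q)=\sigma_{pBF}(T+Q)$. Chaining this with the first paragraph produces $\sigma_{pBF}(T+Q)=\sigma_{gD}(T+Q)=\sigma_{gD}(T)=\sigma_{pBF}(T)$.

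The hard part will not be the spectral identity $\sigma(T+Q)=\sigma(T)$, which is routine, but rather justifying that the Section~2 characterizations apply to $T+Q$ and not only to $T$: the passage from $\sigma_{gD}$ to $\sigma_{gD\mathcal{W}}$ (resp. to $\sigma_{pBF}$) is available only once Browder's theorem (resp. the equality $\sigma_e=\sigma_B$) is known for the perturbed operator. Securing this is exactly what forces the use of the classical stability of $\sigma_W$, $\sigma_B$ and $\sigma_e$ under commuting Riesz, hence quasi-nilpotent, perturbations; with those invariances in hand, both equalities follow by the same bookkeeping as in the preceding proposition.
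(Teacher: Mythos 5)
Your proposal is correct and follows essentially the same route as the paper: establish $\sigma_{gD}(T+Q)=\sigma_{gD}(T)$ (via $\sigma(T+Q)=\sigma(T)$ for a commuting quasi-nilpotent $Q$, hence equality of the accumulation sets of the spectra) and then invoke Theorem \ref{rrr} and Corollary \ref{dfg}. You are in fact more careful than the paper's one-line proof, which leaves implicit exactly the point you flag: that Browder's theorem (resp.\ the equality $\sigma_e=\sigma_B$) must first be transferred to $T+Q$, via the stability of $\sigma_W$, $\sigma_B$ and $\sigma_e$ under commuting Riesz (hence quasi-nilpotent) perturbations, before the Section~2 characterizations can be applied to the perturbed operator.
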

\begin{proof}
Since  $\sigma_{gD}(T+Q)=\sigma_{gD}(T)$,  from Theorem \ref{rrr} and Theorem \ref{dfg} we have the result.
\end{proof}

\begin{remark}
 Let $T\in\mathcal{B}(X)$, we have $\sigma_{pBF}(T)\subset \sigma_e(T),$ $\sigma_{gD\mathcal{W}}(T)\subset \sigma_W(T)$ and $\sigma_{gD}(T)\subset \sigma_D(T)$ but generally these inclusions are proper. Indeed, let $T$ and $S$ defined on $l^2(\mathbb{N})$ by
 $$T(x_1, x_2, x_3,...)=(\frac{1}{2}x_2, \frac{1}{3}x_3,...); \>\>\>\>\> S(x_1, x_2, x_3,...)=(0, \frac{1}{2}x_1, 0, 0, ...).$$
 Then $T$ is quasi-nilpotent with infinite ascent and hence $$\sigma_{gD}(T)=\emptyset \mbox{ but }\sigma_D(T)=\{0 \}.$$
  Furthermore,
  $$\sigma_{pBF}(S)=\sigma_{gD\mathcal{W}}(S)=\emptyset \mbox{ but } \sigma_{e}(S)=\sigma_{W}(S)=\{0 \}.$$
  \end{remark}

The following lemma, will be needed in the sequel to study Riesz perturbations.

\begin{lemma}\label{l2}
Let $T\in \mathcal{B}(X)$.
\begin{enumerate}
  \item $\sigma_D(T)=\sigma_{gD}(T)\cup iso(\sigma_D(T))$;
  \item $\sigma(T)=\sigma_{gD}(T)\cup iso(\sigma(T))$;
  \item $\sigma_{se}(T)=\sigma_{gK}(T)\cup iso(\sigma_{se}(T))$;
  \item $\sigma_{es}(T)=\sigma_{gK}(T)\cup iso(\sigma_{es}(T))$;
  \item $\sigma_{e}(T)=\sigma_{pBF}(T)\cup iso(\sigma_e(T))$;
  \item $\sigma_{W}(T)=\sigma_{gD\mathcal{W}}(T)\cup iso(\sigma_W(T))$.
\end{enumerate}
\end{lemma}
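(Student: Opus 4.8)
The plan is to prove each of the six identities by the same general strategy, since they all have the identical structural form
\[
\sigma_{\text{large}}(T)=\sigma_{\text{small}}(T)\cup \operatorname{iso}\bigl(\sigma_{\text{large}}(T)\bigr),
\]
where in each case $\sigma_{\text{small}}$ is the ``generalized'' spectrum obtained from $\sigma_{\text{large}}$ by allowing a quasi-nilpotent summand in a GKD. The key observation is that all six small spectra are known to be compact and, more importantly, to satisfy $\sigma_{\text{small}}(T)=\sigma_{gK}(T)\cup(\text{something})$, or directly that the complement of $\sigma_{\text{small}}$ meets $\sigma_{\text{large}}$ only in isolated points. First I would fix notation: writing $\Sigma=\sigma_{\text{large}}(T)$ and $\Sigma_0=\sigma_{\text{small}}(T)$, the inclusion $\Sigma_0\subseteq\Sigma$ holds in every case (each small class is weaker than the large one), so the right-hand side is contained in $\Sigma$ automatically once I note $\operatorname{iso}(\Sigma)\subseteq\Sigma$. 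Hence only the forward inclusion $\Sigma\subseteq\Sigma_0\cup\operatorname{iso}(\Sigma)$ needs work.

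For the forward inclusion I would argue by contraposition: suppose $\lambda\in\Sigma$ but $\lambda\notin\operatorname{iso}(\Sigma)$; I must show $\lambda\in\Sigma_0$, equivalently, if $\lambda\notin\Sigma_0$ then $\lambda$ is an isolated point of $\Sigma$. So assume $\lambda\notin\Sigma_0$, i.e.\ $T-\lambda I$ lies in the relevant generalized class and thus admits a GKD $T-\lambda I=(T-\lambda I)_{|X_1}\oplus(T-\lambda I)_{|X_2}$ with $(T-\lambda I)_{|X_1}$ in the corresponding (non-generalized) class and $(T-\lambda I)_{|X_2}$ quasi-nilpotent. Because the $X_1$-part already belongs to the large class, it is stable under small perturbations of $\lambda$: there is a punctured neighbourhood $0<|\mu-\lambda|<\varepsilon$ on which $(T-\mu I)_{|X_1}$ remains in the large class \emph{and} the quasi-nilpotent part $(T-\lambda I)_{|X_2}$ becomes invertible (since a quasi-nilpotent operator minus $(\mu-\lambda)I$ is invertible for $\mu\neq\lambda$). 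Therefore $T-\mu I$ belongs to the large class for every such $\mu$, which says $\mu\notin\Sigma$; this exhibits $\lambda$ as an isolated point of $\Sigma$, completing the contrapositive. The concrete inputs here are the punctured-neighbourhood stability of each Fredholm-type class (openness of the semi-Fredholm, Fredholm, Weyl, bounded-below, surjective, Drazin, and semi-regular classes under the addition of a small scalar) together with the GKD structure supplied by the hypothesis $\lambda\notin\Sigma_0$.

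The step I expect to be the main obstacle is verifying, class by class, the punctured-neighbourhood stability for the summand $(T-\lambda I)_{|X_1}$, and in particular the semi-regular cases (3) and (4), where the relevant perturbation result is Theorem 2.1 of \cite{WB}: a semi-regular operator $A$ satisfies that $A-\mu I$ is semi-regular for all small $\mu$, which drives the argument for $\sigma_{se}$ and $\sigma_{es}$. For the Drazin and spectrum cases (1) and (2), the cleanest route is to invoke directly the Koliha characterization recalled in the preliminaries, namely that $\lambda\notin\sigma_{gD}(T)$ iff $\lambda\notin\operatorname{acc}(\sigma(T))$, so that $\sigma(T)\setminus\sigma_{gD}(T)=\sigma(T)\setminus\operatorname{acc}(\sigma(T))=\operatorname{iso}(\sigma(T))$, giving (2) at once; and identity (1) follows the same way using $\lambda\notin\sigma_{gD}(T)$ exactly when $\lambda$ is Drazin invertible or a boundary/isolated point of $\sigma_D(T)$. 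For cases (5) and (6) I would lean on the GKD characterizations of pseudo B-Fredholm and pseudo B-Weyl operators together with the punctured-neighbourhood constancy of the Fredholm (resp.\ Weyl) property. The remaining care is purely bookkeeping: confirming in each instance that the chosen small class really is the ``GKD-generalization'' of the large class so that the quasi-nilpotent $X_2$-summand becomes invertible off $\lambda$, which is the single mechanism that forces isolation.
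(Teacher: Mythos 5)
Your proposal is correct and follows essentially the same route as the paper: the reverse inclusions are noted as automatic, and the forward inclusions are obtained from the GKD splitting given by $\lambda\notin\sigma_{\mathrm{small}}(T)$, with the regular summand stable under small scalar perturbation and the quasi-nilpotent summand invertible off $\lambda$, forcing $T-\mu I$ into the large class on a punctured disc. The only cosmetic difference is that for parts (3) and (4) the paper invokes \cite[Theorem 2.2]{JZ} in one step (punctured-disc semi-regularity for pseudo-Fredholm operators) rather than re-deriving it summand by summand as you do, but the underlying mechanism is identical.
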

\begin{proof}
 $(1)$ Let $\lambda\in \sigma_D(T)\setminus \sigma_{gD}(T)$, then $T-\lambda$ is a generalized Drazin operator hence there exists an $\varepsilon >0$ such that $T-\mu$ is  Drazin invertible for all $\mu\in D(\lambda, \epsilon)\setminus\{\lambda\}.$ Indeed,
If $T-\lambda$ is a generalized Drazin operator, then there exists  two closed $T$-invariant subspaces $X_{1}$ and $X_{2}$ of $X$
 such that $ X=X_{1}\oplus X_{2}$ and $T-\lambda=(T-\lambda)_{\shortmid X_{1}} \oplus (T-\lambda)_{\shortmid X_{2}}$
 where $(T-\lambda)_{\shortmid X_{1}}$ is invertible and $(T-\lambda)_{\shortmid X_{2}}$ is quasi-nilpotent.
 If $X_{1}=\{0\}$, $T-\lambda$ is quasi-nilpotent,  then  for all $\mu\neq\lambda$,  $T- \mu $ is invertible, hence  $T- \mu$  is  Drazin invertible.
 If $X_{1}\neq\{0\}$ then $(T-\lambda)_{\shortmid X_{1}}$ is invertible,  hence there exists $\varepsilon > 0$ such that $(T-\mu)_{\shortmid X_{1}} $ is invertible for all $\mu\in D(\lambda,\epsilon)$, hence  $T- \mu $ is  Drazin invertible for all $\mu\in D(\lambda,\epsilon)$.
 As $(T-\lambda)_{\shortmid X_{2}}$ is quasi-nilpotent, then for all $\mu\neq \lambda$ $(T-\mu)_{\shortmid X_{2}}$ is invertible and hence $(T-\mu)_{\shortmid X_{2}}$ is  Drazin invertible  for all $\mu\in D(\lambda,\varepsilon)\setminus\{\lambda\}.$ Since $(T-\mu)_{\shortmid X_{2}}$ and $(T-\mu)_{\shortmid X_{1}}$ are  Drazin invertible for all $\mu\in D(\lambda,\varepsilon)\setminus\{\lambda\}$, then we get $T-\mu$ is  Drazin invertible  for all $\mu \in D(\lambda, \varepsilon)\setminus\{\lambda\}$. This implies that $$D(\lambda,\epsilon)\setminus\{\lambda\}\cap \sigma_D(T)=\emptyset,$$ hence $\lambda\in iso (\sigma_D(T)).$ Therefore, $$\sigma_D(T)\subseteq\sigma_{gD}(T)\cup iso(\sigma_D(T)).$$ The reverse inclusion is always true.\\
The assertion $(2)$ is clear, since $\sigma_{gD}(T)=acc\sigma(T)$.\\
For $(3),$ let $\lambda\in \sigma_{se}(T)\setminus \sigma_{gK}(T)$,
$T-\lambda$ is a pseudo Fredholm operator.
By \cite[Theorem 2.2]{JZ}, there exists an $\epsilon >0$ such that $T-\mu$ is semi-regular for all $\mu\in D(\lambda, \epsilon)\setminus\{\lambda\}$, this implies that $D(\lambda,\epsilon)\setminus\{\lambda\}\cap \sigma_{se}(T)=\emptyset$, hence $\lambda\in  iso (\sigma_{se}(T))$. Therefore, $\sigma_{se}(T)\subseteq\sigma_{gK}(T)\cup iso(\sigma_{se}(T))$, the opposite  inclusion is always  true.\\
To prove $(4),$ let $\lambda\in \sigma_{es}(T)\setminus \sigma_{gK}(T)$,
$T-\lambda$ is a pseudo Fredholm operator.
By \cite[Theorem 2.2]{JZ}, there exists an $\epsilon >0$ such that $T-\mu$ is semi-regular for all $\mu\in D(\lambda, \epsilon)\setminus\{\lambda\}$,  hence $T-\mu$ is essentially semi-regular for all $\mu\in D(\lambda, \epsilon)\setminus\{\lambda\}$,  this implies that $D(\lambda,\epsilon)\setminus\{\lambda\}\cap \sigma_{es}(T)=\emptyset$, thus $\lambda\in  iso (\sigma_{se}(T))$. Therefore, $\sigma_{es}(T)\subseteq\sigma_{gK}(T)\cup iso(\sigma_{es}(T))$, since $\sigma_{gK}(T)\subseteq\sigma_{es}(T)$, we have $$\sigma_{es}(T)=\sigma_{gK}(T)\cup iso(\sigma_{es}(T)).$$
For the assertion $(5),$ let $\lambda\in \sigma_e(T)\setminus \sigma_{pBF}(T)$, then $T-\lambda$ is a pseudo B-Freholm operator, hence there exists an $\epsilon >0$ such that $T-\mu$ is Fredholm for all $\mu\in D(\lambda, \epsilon)\setminus\{\lambda\}$.
Indeed, without loss of generality we can assume that $\lambda=0.$
If $T$ is pseudo B-Fredholm, then there  exists two closed $T$-invariant  subspaces $X_1$ and $X_2$  such that $ X=X_{1}\oplus X_{2}$; $T_{\shortmid X_{1}}$ is Fredholm, $T_{\shortmid X_{2}}$ is quasi-nilpotent  and $T=T_{\shortmid X_{1}} \oplus T_{\shortmid X_{2}}$.\\
If $X_{1}=\{0\}$, $T$ is quasi-nilpotent, hence $\mu I - T$ is invertible for all $\mu \neq 0$, that is $\mu I - T$ is Fredholm  for all $\mu \neq 0$.\\
If $X_{1}\neq\{0\},$ then $T_{\shortmid X_{1}}$ is Fredholm, hence there exists
$\varepsilon > 0$ such that $(\mu I - T)_{\shortmid X_{1}} $ is Fredholm for all $\mu\in D(0,\epsilon)$.
As $T_{\shortmid X_{2}}$ is quasi-nilpotent, then for all $\mu\neq 0$,  $(\mu I - T)_{\shortmid X_{2}}$ is invertible, then $(\mu I -T)_{\shortmid X_{2}}$ is Fredholm  for all $\mu\in D^*(0,\varepsilon)$. Since $(\mu I-T)_{\shortmid X_{2}}$ and $(\mu I-T)_{\shortmid X_{1}}$ are Fredholm for all $\mu\in D^*(0,\varepsilon)$, we have $\mu I -T$ is  Fredholm for all $\mu \in D^*(0, \varepsilon)$.\\ This implies that $D(\lambda,\epsilon)\setminus\{\lambda\}\cap \sigma_{e}(T)=\emptyset$, hence $\lambda\in  iso (\sigma_{e}(T))$. Therefore, $$\sigma_{e}(T)\subseteq\sigma_{pBF}(T)\cup iso(\sigma_e(T)).$$ Since the opposite  inclusion is true, then we conclude $(5).$\\
By a similar argument as in $(5)$, we can prove $(6).$
\end{proof}

\begin{theorem}\label{rr}
Let $T\in \mathcal{B}(X)$ and  $R\in \mathcal{B}(X)$ be a Riesz operator which commutes with $T$. Then the following statements hold:
\begin{enumerate}
  \item If $iso(\sigma_e(T))=\emptyset$, then $\sigma_{pBF}(T+R)=\sigma_{pBF}(T)$;
  \item If $iso(\sigma_W(T))=\emptyset$, then $\sigma_{gD\mathcal{W}}(T+R)=\sigma_{gD\mathcal{W}}(T)$.
\end{enumerate}
\end{theorem}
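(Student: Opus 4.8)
The plan is to reduce both equalities to the classical invariance of the essential spectrum and of the Weyl spectrum under commuting Riesz perturbations, and then to discard the isolated points using Lemma~\ref{l2}. The structural identities $\sigma_e(S)=\sigma_{pBF}(S)\cup iso(\sigma_e(S))$ and $\sigma_W(S)=\sigma_{gD\mathcal{W}}(S)\cup iso(\sigma_W(S))$ from parts $(5)$ and $(6)$ of Lemma~\ref{l2} show that $\sigma_{pBF}$ and $\sigma_{gD\mathcal{W}}$ differ from $\sigma_e$ and $\sigma_W$ only by isolated points; hence, once the larger spectra are known to be preserved by $R$, the hypotheses $iso(\sigma_e(T))=\emptyset$ and $iso(\sigma_W(T))=\emptyset$ will force the finer spectra to be preserved as well.

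First I would record the two perturbation facts. Since $R$ is Riesz, its image in the Calkin algebra is quasi-nilpotent, and adding a commuting quasi-nilpotent element leaves the Calkin-algebra spectrum unchanged; by Atkinson's theorem this gives $\sigma_e(T+R)=\sigma_e(T)$. For the Weyl spectrum one must additionally check that the index is unchanged: for $\lambda\notin\sigma_e(T)=\sigma_e(T+R)$, the homotopy $t\mapsto T+tR-\lambda$, $t\in[0,1]$, consists of Fredholm operators, because each $tR$ is again Riesz and commutes with $T$, so the essential spectrum is the same throughout the path; since the index is locally constant on the Fredholm domain, $ind(T+R-\lambda)=ind(T-\lambda)$. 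Consequently $T+R-\lambda$ is Weyl exactly when $T-\lambda$ is Weyl, i.e. $\sigma_W(T+R)=\sigma_W(T)$.

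To prove $(1)$ I would apply Lemma~\ref{l2}$(5)$. The hypothesis $iso(\sigma_e(T))=\emptyset$ gives $\sigma_e(T)=\sigma_{pBF}(T)$. Because $\sigma_e(T+R)=\sigma_e(T)$, we also have $iso(\sigma_e(T+R))=\emptyset$, so applying Lemma~\ref{l2}$(5)$ to $T+R$ yields $\sigma_e(T+R)=\sigma_{pBF}(T+R)$. Chaining the three equalities gives $\sigma_{pBF}(T+R)=\sigma_e(T+R)=\sigma_e(T)=\sigma_{pBF}(T)$.

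Statement $(2)$ is entirely parallel, using Lemma~\ref{l2}$(6)$ and $\sigma_W(T+R)=\sigma_W(T)$ in place of the essential-spectrum data: the hypothesis $iso(\sigma_W(T))=\emptyset$ gives $\sigma_W(T)=\sigma_{gD\mathcal{W}}(T)$, and likewise $\sigma_W(T+R)=\sigma_{gD\mathcal{W}}(T+R)$, whence $\sigma_{gD\mathcal{W}}(T+R)=\sigma_{gD\mathcal{W}}(T)$. I expect the only genuine difficulty to lie in the preservation of the Weyl spectrum, specifically the index-invariance step of the second paragraph, which is where commutativity of $R$ with $T$ is essential; the essential-spectrum invariance and the bookkeeping with isolated points are routine, and the emptiness hypotheses are precisely what converts the coarse invariance into the fine one.
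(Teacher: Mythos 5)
Your proof is correct and takes essentially the same route as the paper's: both reduce the claim to the invariance of $\sigma_e$ and $\sigma_W$ under commuting Riesz perturbations and then eliminate isolated points via parts $(5)$ and $(6)$ of Lemma~\ref{l2} together with the emptiness hypotheses. The only difference is one of detail: you supply proofs of the two classical invariance facts (the Calkin-algebra argument and the index-homotopy argument) and spell out the application of Lemma~\ref{l2} to $T+R$, steps the paper asserts or leaves implicit.
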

\begin{proof}
To prove $(1),$  we have $\sigma_e(T+R)=\sigma_e(T)$ and since $iso(\sigma_e(T))=\emptyset $, then by lemma \ref{l2}, we get
 $\sigma_{pBF}(T)=\sigma_e(T),$ hence $\sigma_{pBF}(T+R)=\sigma_{pBF}(T).$\\
For the assertion $(2),$  we have $\sigma_W(T+R)=\sigma_W(T)$ and since $iso(\sigma_W(T))=\emptyset,$ then by lemma \ref{l2}, we have
 $\sigma_{gD\mathcal{W}}(T)=\sigma_W(T)$, hence $\sigma_{gD\mathcal{W}}(T+R)=\sigma_{pBW}(T)$.
 \end{proof}
%%%%%%%%%%%%%%%%%%%%%%%%%%%%%%%%%%%%%%%%%%%%%%%%%%%%%%%%%%%%%%%%%%%%%%%%%%%%%%%%%%%%%%%%%%%%%%%%%%%%%%%%%%%%%%%%%%%%%%%
%%%%%%%%%%%%%%%%%%%%%%%%%%%%%%%%%%%%%%%%%%%%%%%%%%%%%%%%%%%%%%%%%%%%%%%%%%%%%%%%%%%%%%%%%%%%%%%%%%%%%%%%%%%%%%%%%%%%%%%%%%%%%
Note that the essential quasi-Fredholm spectrum is not stable under commuting quasi-nilpotent and compact perturbations,
hence it is not stable under commuting Riesz perturbation, see \cite{Mbe2}.
\begin{theorem}
Let $T\in\mathcal{B}(X).$
\begin{enumerate}
\item   If $iso(\sigma_{es}(T))=\emptyset$ and $R$ is a Riesz operator such that $TR=RT,$ then
  $$\sigma_{gK}(T+R)=\sigma_{gK}(T)\mbox{  and  } \sigma_{eq}(T+R)=\sigma_{eq}(T).$$
  \item  If $iso(\sigma_{se}(T))=\emptyset$   and $Q$ is a quasi-nilpotent operator such that $QT=TQ,$ then
  $$\sigma_{gK}(T+Q)=\sigma_{gK}(T)\mbox{  and  } \sigma_{eq}(T+Q)=\sigma_{eq}(T)$$
\end{enumerate}
\end{theorem}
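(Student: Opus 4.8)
The plan is to follow the pattern of Theorem~\ref{rr}: for each spectrum I reduce the problem to the collapse of a fine spectrum onto a coarser, perturbation-stable one, and then quote the stability of the coarse spectrum. For the generalized Kato spectrum the required collapses are already furnished by Lemma~\ref{l2}(3)--(4); the one new structural ingredient is the analogous statement for the essential quasi-Fredholm spectrum, namely
$$\sigma_{es}(T)=\sigma_{eq}(T)\cup iso(\sigma_{es}(T))\quad\text{and}\quad \sigma_{se}(T)=\sigma_{eq}(T)\cup iso(\sigma_{se}(T)).$$
The single perturbation input I will use is that the essentially semi-regular spectrum is invariant under commuting Riesz perturbations, $\sigma_{es}(T+R)=\sigma_{es}(T)$ whenever $R$ is Riesz and $TR=RT$; this is the natural analogue of the facts $\sigma_e(T+R)=\sigma_e(T)$ and $\sigma_W(T+R)=\sigma_W(T)$ exploited in Theorem~\ref{rr}, and it holds because essential semi-regularity differs from semi-regularity only by a finite-dimensional subspace and is therefore insensitive to Riesz perturbations (cf. the semi-Fredholm perturbation theory in \cite{Aie}).

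To prove the new collapse I would first record the elementary inclusions $\sigma_{gK}(T)\subseteq\sigma_{es}(T)\subseteq\sigma_{se}(T)$ and $\sigma_{eq}(T)\subseteq\sigma_{es}(T)\subseteq\sigma_{se}(T)$, which follow from the implications semi-regular $\Rightarrow$ essentially semi-regular $\Rightarrow$ (of Kato type, hence admitting a GKD, and quasi-Fredholm). Now fix $\lambda\in\sigma_{es}(T)\setminus\sigma_{eq}(T)$, so that $T-\lambda$ is quasi-Fredholm. By the punctured-neighbourhood theorem for quasi-Fredholm operators (see \cite{lab}, and compare \cite[Theorem 2.2]{JZ} for the semi-regular case used in Lemma~\ref{l2}) there is $\varepsilon>0$ with $T-\mu$ semi-regular, hence essentially semi-regular, for all $\mu\in D(\lambda,\varepsilon)\setminus\{\lambda\}$; thus $\lambda\in iso(\sigma_{es}(T))$. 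Together with $\sigma_{eq}(T)\subseteq\sigma_{es}(T)$ this gives the first identity, and the second is obtained verbatim with $\sigma_{se}$ in place of $\sigma_{es}$.

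With these in hand, assertion (1) is immediate. Since $iso(\sigma_{es}(T))=\emptyset$, the new identity and Lemma~\ref{l2}(4) give $\sigma_{eq}(T)=\sigma_{es}(T)=\sigma_{gK}(T)$. The stability $\sigma_{es}(T+R)=\sigma_{es}(T)$ yields $iso(\sigma_{es}(T+R))=\emptyset$, so the same two collapses apply to $T+R$, and therefore
$$\sigma_{gK}(T+R)=\sigma_{es}(T+R)=\sigma_{es}(T)=\sigma_{gK}(T),\qquad \sigma_{eq}(T+R)=\sigma_{es}(T+R)=\sigma_{es}(T)=\sigma_{eq}(T).$$
For assertion (2) I would observe that the hypothesis $iso(\sigma_{se}(T))=\emptyset$ is in fact stronger: by Lemma~\ref{l2}(3) it forces $\sigma_{se}(T)=\sigma_{gK}(T)$, and since $\sigma_{gK}(T)\subseteq\sigma_{es}(T)\subseteq\sigma_{se}(T)$ all three sets coincide, whence $iso(\sigma_{es}(T))=\emptyset$ too. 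As a quasi-nilpotent operator $Q$ is in particular Riesz, so assertion (1) applied with $R=Q$ delivers both equalities; alternatively one runs the argument of the previous paragraph directly with $\sigma_{se}$, Lemma~\ref{l2}(3) and the second new identity.

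The main obstacle is the punctured-neighbourhood theorem for quasi-Fredholm operators, which is precisely the mechanism that makes $\sigma_{eq}$ behave like $\sigma_{se}$ and $\sigma_{gK}$ once isolated points are excluded; this is why, as noted in the remark preceding the theorem, $\sigma_{eq}$ is \emph{not} stable under commuting Riesz or quasi-nilpotent perturbations in the absence of the $iso$ hypothesis. A secondary point requiring care is the commuting-Riesz invariance $\sigma_{es}(T+R)=\sigma_{es}(T)$; if a direct reference is unavailable I would establish it through the Kato-type/semi-Fredholm perturbation machinery, using that a Riesz operator commuting with $T$ leaves the essentially semi-regular structure unchanged modulo finite rank.
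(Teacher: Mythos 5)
Your proposal is correct, and for part (1) it follows essentially the paper's own skeleton: the paper likewise collapses both $\sigma_{gK}$ and $\sigma_{eq}$ onto $\sigma_{es}$ and then invokes the invariance $\sigma_{es}(T+R)=\sigma_{es}(T)$ under commuting Riesz perturbations; the reference you were missing for that invariance is \cite[Corollary 17]{KMR}, so the "secondary point" you flagged is a known theorem rather than a gap (though your heuristic that essential semi-regularity is "insensitive to Riesz perturbations modulo finite rank" is not by itself a proof of it). There are two genuine differences. First, for the collapse $\sigma_{es}(T)=\sigma_{eq}(T)\cup iso(\sigma_{es}(T))$ the paper uses no punctured-neighbourhood theorem for quasi-Fredholm operators at all: it simply sandwiches $\sigma_{gK}(T)\subseteq\sigma_{eq}(T)\subseteq\sigma_{es}(T)$ and applies Lemma~\ref{l2}(4), so the quasi-Fredholm collapse comes for free from the GKD one. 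Your route through a Grabiner-type punctured neighbourhood (note that \cite{lab} is a Hilbert-space result; in Banach spaces one needs the topological-uniform-descent version) is heavier, but it buys independence from the inclusion $\sigma_{gK}(T)\subseteq\sigma_{eq}(T)$, i.e.\ from the assertion that quasi-Fredholm operators admit a GKD, which is delicate outside Hilbert spaces and which the paper's sandwich quietly relies on. Second, for part (2) the paper argues independently of part (1), using M\"uller--Mbekhta's stability of the semi-regular spectrum under commuting quasi-nilpotent perturbations \cite{Mbe2} together with Lemma~\ref{l2}(3); you instead reduce (2) to (1) by showing $iso(\sigma_{se}(T))=\emptyset$ forces $\sigma_{se}(T)=\sigma_{gK}(T)$, hence (by $\sigma_{gK}(T)\subseteq\sigma_{es}(T)\subseteq\sigma_{se}(T)$) $iso(\sigma_{es}(T))=\emptyset$, and then noting quasi-nilpotents are Riesz. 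Your reduction is correct, more economical (one perturbation input instead of two), and exposes (2) as a formal corollary of (1); the paper's version keeps (2) self-contained and avoids routing a quasi-nilpotent perturbation through the Riesz machinery.
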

\begin{proof}
  To prove $(1),$ since $\sigma_{gK}(T)\subseteq \sigma_{eq}(T)\subseteq \sigma_{es}(T)$, then by part  (4) of Lemma \ref{l2},
    we have  $$\sigma_{eq}(T)\cup iso(\sigma_{es}(T))=\sigma_{es}(T).$$
     According to \cite[Corollary 17]{KMR}, if $R$  is a Riesz operator commutes with $T$, then we have $\sigma_{es}(T+R)=\sigma_{es}(T).$ By hypothesis
     $ iso(\sigma_{es}(T))=\emptyset,$ hence $\sigma_{es}(T)=\sigma_{eq}(T)$  and by Lemma \ref{l2}, we have $\sigma_{es}(T)=\sigma_{gK}(T)$. This gives the result.\\
To prove $(2),$ from \cite{Mbe2}, if $Q$ is a quasi-nilpotent operator, then we have $\sigma_{se}(T+Q)=\sigma_{se}(T).$
 By hypothesis $ iso(\sigma_{se}(T))=\emptyset,$ hence by Lemma \ref{l2}, we have that $\sigma_{se}(T)=\sigma_{gK}(T)$. This gives the result.
\end{proof}

%%%%%%%%%%%%%%%%%%%%%%%%%%%%%%%%%%%%%%%%%%%%%%%%%%%%%%%%%%%%%%%%%%%%%%%%%%%%%%%%%%%%%%%%%%%%%%%%%%%
%%%%%%%%%%%%%%%%%%%%%%%%%%%%%%%%%%%%%%%%%%%%%%%%%%%%%%%%%%%%%%%%%%%%%%%%%%%%%%%%%%%%%%%%%%%%%%%%%%%%%%%%%%%%%%%%%
\begin{example}
Let  $T$ is an unilateral weighted right shift on $l^p(\mathbb{N})$, $1\leq p< \infty$, with weight sequence $(w_{n})_{n\in\mathbb{N}}$. If $lim_{n\rightarrow\infty }inf(w_1 ....w_n)^{1/n}=0,$ then $T$ and $T^*$ have the SVEP and by \cite[corollary 3.118]{Aie}:
$$\sigma_{su}(T)=\sigma_{ap}(T)=\sigma_{se}(T)=\sigma_{e}(T)=\sigma_{W}(T)=\sigma(T)=\textbf{D}(0, r(T)),$$  where  $\textbf{D}(0, r(T))$ the closed disc,
 hence $iso(\sigma(T))=iso(\sigma_W(T))=iso(\sigma_e(T))=\emptyset$. If $R$ a Riesz operator which commutes with $T$ and $Q$ a quasi-nilpotent operator commutes with $T$, then:
\begin{center}
$\sigma_{gK}(T+Q)=\sigma_{gK}(T)$;
 $\sigma_{pBF}(T+R)=\sigma_{pBF}(T)$;
  $\sigma_{pBW}(T+R)=\sigma_{pBW}(T)$.$\Box$
\end{center}
\end{example}

%%%%%%%%%%%%%%%%%%%%%%%%%%%%%%%%%%%%%%%%%%%%%%%%%%
%%%%%%%%%%%%%%%%%%%%%%%%%%%%%%%%%%%%%%%%%%%%%%%%%%%%%%%%%%%%%%%%%%%%%%%%%%%%%%%%%%%%%%%%%%%%%%%%%%
%%%%%%%%%% REMARK %%%%%%%%%%%%%%%%%%%%%%%%%%%%%%%%%%%%%%%%%%%%%%%%%%%%%%%%%%%%%

\section{Commutator and Pseudo B-Fredholm Perturbations }

Let $T,S\in\mathcal{B}(X)$, denote by $[T,S]$ the commutator of $T$ and $S.$\\
In what follows, we prove that we can perturb a pseudo B-Fredholm operator $T\in\mathcal{B}(X)$
by a bounded operator $S$ satisfying $[T,S]=0$ to obtain a
Fredholm operator  $T+S.$

\begin{proposition}
Let $T\in\mathcal{  B}(X)$ a pseudo B-Fredholm operator. Then  there exists $S\in \mathcal{B}(X)$ such that :
\begin{center}
$T+S$ is Fredholm, $TS$ is quasi-nilpotent and $[T,S]=0.$
\end{center}
\end{proposition}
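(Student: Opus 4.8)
The plan is to exploit the defining decomposition of a pseudo B-Fredholm operator and to perturb only its quasi-nilpotent summand. Since $T$ is pseudo B-Fredholm, there exists $(X_1,X_2)\in Red(T)$ such that $T_1:=T_{\shortmid X_1}$ is Fredholm and $T_2:=T_{\shortmid X_2}$ is quasi-nilpotent, so $T=T_1\oplus T_2$. First I would let $P$ denote the projection of $X$ onto $X_2$ along $X_1$, which is bounded because $X_1$ and $X_2$ are closed with $X=X_1\oplus X_2$, and simply set $S:=P$ (equivalently $S=0_{X_1}\oplus I_{X_2}$). The idea is that we leave the already-Fredholm part untouched and make the quasi-nilpotent part invertible by adding the identity on $X_2$.

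Next I would verify the three requirements in turn. For $[T,S]=0$: since both $X_1$ and $X_2$ are $T$-invariant, $T$ respects the decomposition defining $P$, whence $TP=0\oplus T_2=PT$. For the quasi-nilpotency of $TS$: with $S=P$ one gets $TP=0_{X_1}\oplus T_2$, whose spectrum is $\sigma(0)\cup\sigma(T_2)=\{0\}$, so $r(TP)=0$ and $TP$ is quasi-nilpotent. For $T+S$ Fredholm: $T+P=T_1\oplus(T_2+I_{X_2})$, where $T_2+I_{X_2}$ is invertible because $\sigma(T_2)=\{0\}$ forces $1\notin\sigma(T_2)$, while $T_1$ is Fredholm by hypothesis; since a direct sum of two operators is Fredholm precisely when each summand is, $T+P$ is Fredholm.

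More generally the same argument goes through with $S=cP$ for any nonzero scalar $c$, since then $T_2+cI_{X_2}$ stays invertible and $cT_2$ stays quasi-nilpotent. The only delicate point, and the conceptual heart of the construction, is the requirement that $S$ commute with $T$: this is what rules out arbitrary invertible or finite-rank corrections and forces the perturbation to respect the reducing pair $(X_1,X_2)$, so that adding a scalar on the quasi-nilpotent block (rather than, say, subtracting $T_2$ outright) is the natural choice. Everything else reduces to the routine computation of spectra of direct sums, so I do not anticipate any genuine obstacle beyond correctly invoking the reducing decomposition.
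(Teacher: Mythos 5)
Your proposal is correct and takes essentially the same approach as the paper: both work from the reducing pair $(X_1,X_2)$ and perturb only the quasi-nilpotent block, the paper choosing $S=0\oplus(I_2-T_2)$ so that $T+S=T_1\oplus I_2$ and using $r(AB)\le r(A)r(B)$ for commuting operators, while you take the projection $S=0\oplus I_2$ so that $T+S=T_1\oplus(T_2+I_2)$ and $TS=0\oplus T_2$ is quasi-nilpotent on sight --- an immaterial variant. One trivial slip: invertibility of $T_2+I_{X_2}$ requires $-1\notin\sigma(T_2)$ rather than $1\notin\sigma(T_2)$, which of course also holds since $\sigma(T_2)=\{0\}$.
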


\begin{proof}
If $T$ is pseudo B-Fredholm, then there exists two closed $T$-invariant subspaces $X_{1}$ and $X_{2}$ of $X$
such that $X=X_1\oplus X_2$ and $T_1=T_{\shortmid X_{1}}$ is upper semi-Fredholm and $T_2=T_{\shortmid X_{2}}$ is quasi-nilpotent.
Let $S=0\oplus (I_2-T_2)$,   $I_2=I_{|X_{2}}$.  Since  $T_1$ is Fredholm, then $T+S=T_1\oplus I_2$ is a Fredholm operator. We have : \\
\begin{eqnarray*}
% \nonumber to remove numbering (before each equation)
  TS &=& [T_1\oplus T_2] [0\oplus (I_2-T_2)] \\
   &=& T_2(I_2-T_2)= (I_2-T_2)T_2\\
   &=& [0\oplus (I_2-T_2)][T_1\oplus T_2]=ST
\end{eqnarray*}
 From the well known spectral radius formula $$r(TS)=r((I_2-T_2)T_2)\leq r(I_2-T_2)r(T_2)=0$$
 Therefore $TS$ is quasinilpotent.
\end{proof}
%%%%%%%%%%%%%%%%%%%%%%%%%%%%%%%%%%%%%%%%%%%%%%%%%%%%%%%%%%%%%%%%%%%%%%%%%%%%%%%%%%%%%%%%%%%%%%%%%%%%%
%%%%%%%%%%%%%%%%%%%%%%%%%%%%%%%%%%%%%%%%%%%%%%%%%%%%%%%%%%%%%%%%%%%%%%%%%%%%%%%%%%%%%%%%%%%%%%%%%%
%%%%%%%%%%%%%%%%%%%%%%%%%%%%%%%%%%%%%%%%%%%%%%%%%%%%%%%%%%%%%%%%%%%%%%%%%%%%%%%%%%%%%%%%%%%%%%%%%%%%%%%%%
In what follows, we prove that we can perturb a pseudo Fredholm operator $T\in\mathcal{B}(X)$
by a bounded operator $S$ satisfying $[T,S]=0$ to obtain a
semi-regular operator  $T+S.$
\begin{proposition}
Let $T\in\mathcal{  B}(X)$ a pseudo Fredholm operator. Then  there exists $S\in \mathcal{B}(X)$ such that :
\begin{center}
$T+S$ is semi-regular, $TS$ is quasi-nilpotent and $[T,S]=0$.
\end{center}
\end{proposition}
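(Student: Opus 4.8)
The plan is to imitate the proof of the preceding proposition almost verbatim, replacing ``Fredholm'' by ``semi-regular''. Since $T$ is pseudo Fredholm it admits a generalized Kato decomposition, so there is $(X_1,X_2)\in Red(T)$ with $T_1:=T_{\shortmid X_1}$ semi-regular, $T_2:=T_{\shortmid X_2}$ quasi-nilpotent and $T=T_1\oplus T_2$. I would take the same perturbation as before, namely $S:=0\oplus(I_2-T_2)$ with $I_2=I_{\shortmid X_2}$, so that $T+S=T_1\oplus I_2$.

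The commutator and quasi-nilpotency assertions then transfer unchanged from the previous proposition. Since $T_2$ and $I_2-T_2$ are both polynomials in $T_2$, they commute, whence
$$TS=T_2(I_2-T_2)=(I_2-T_2)T_2=ST,$$
so that $[T,S]=0$; and the spectral radius formula for commuting operators gives
$$r(TS)=r\big(T_2(I_2-T_2)\big)\le r(T_2)\,r(I_2-T_2)=0,$$
so $TS$ is quasi-nilpotent because $T_2$ is.

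The only genuinely new point, and the one I expect to be the crux, is to verify that $T+S=T_1\oplus I_2$ is semi-regular rather than merely Fredholm. This comes down to the stability of semi-regularity under direct sums, here applied to the semi-regular operator $T_1$ and the invertible (hence semi-regular) operator $I_2$. Explicitly, from $(T_1\oplus I_2)^n=T_1^n\oplus I_2$ one obtains $R\big((T_1\oplus I_2)^n\big)=R(T_1^n)\oplus X_2$, so the range is closed and $R^{\infty}(T_1\oplus I_2)=R^{\infty}(T_1)\oplus X_2$; since $N(T_1\oplus I_2)=N(T_1)\oplus\{0\}$ and $N(T_1)\subseteq R^{\infty}(T_1)$ by semi-regularity of $T_1$, the inclusion $N(T_1\oplus I_2)\subseteq R^{\infty}(T_1\oplus I_2)$ holds. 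Hence $T+S$ is semi-regular, which completes the argument.
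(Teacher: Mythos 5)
Your proposal is correct and follows essentially the same route as the paper: the same decomposition $T=T_1\oplus T_2$, the same perturbation $S=0\oplus(I_2-T_2)$, the same commutator computation, and the same spectral radius bound. The only difference is that you spell out two details the paper leaves implicit, namely the verification that $T_1\oplus I_2$ is semi-regular and the quasi-nilpotency of $TS$ (which the paper only proves in the preceding proposition), and both of your verifications are sound.
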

\begin{proof}
 $T$ is  a pseudo Fredholm operator, then there exists a subsets $M$ and $N$  of $X$ such that
  \begin{center}
  $X=M\oplus N$ and $T=T_1\oplus T_2$
\end{center}
with  $T_1=T_{|M}$ is a semi-regular operator and $T_2=T_{|N}$ is a quasinilpotent.\\
Let $S=0\oplus (I_2-T_2)$,   $I_2=I_{|N}$.  Since  $T_1$ is semi-regular then    $T+S=T_1\oplus I_2$ is a semi-regular operator. We have : \\
\begin{eqnarray*}
% \nonumber to remove numbering (before each equation)
  TS &=& [T_1\oplus T_2] [0\oplus (I_2-T_2)] \\
   &=& T_2(I_2-T_2)= (I_2-T_2)T_2\\
   &=& [0\oplus (I_2-T_2)][T_1\oplus T_2]=ST
\end{eqnarray*}
\end{proof}

In the following, we give a generalization of \cite[Theorem 2.1]{O} and \cite[Proposition 1.1]{LW}.
\begin{theorem}
Let $T\in \mathcal{B}(X)$ a  pseudo B-Weyl operator. Then there exists an operator $F\in \mathcal{B}(X)$ such that $T+\lambda F$ is invertible for all $\lambda\in\mathbb{C}\setminus\{0\}$ and $$[T,F]q(T,F)[T,F]=0$$ Where $q(T,F)$ is any polynomial in $T$ and $F$.
\end{theorem}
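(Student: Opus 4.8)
The plan is to reduce the problem, via the pseudo B-Weyl decomposition, to a single semi-regular summand on which the perturbation can be written down explicitly. By hypothesis there is $(X_1,X_2)\in Red(T)$ with $T=T_1\oplus T_2$, where $T_1=T_{|X_1}$ is Weyl and $T_2=T_{|X_2}$ is quasi-nilpotent. Since $T_1$ is Fredholm it admits a Kato-type decomposition, so $X_1=X_1'\oplus X_1''$ reduces $T_1$ with $T_1|_{X_1'}$ semi-regular and $T_1|_{X_1''}$ nilpotent on a finite-dimensional space; as $\mathrm{ind}(T_1)=0$ and $\mathrm{ind}(T_1|_{X_1''})=0$, the operator $T_0:=T_1|_{X_1'}$ is a semi-regular Weyl operator. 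Grouping the two spectrally trivial pieces, set $Q:=T_1|_{X_1''}\oplus T_2$ on $X_Q:=X_1''\oplus X_2$; then $Q$ is quasi-nilpotent, so $Q+\lambda I$ is invertible for every $\lambda\neq0$ and $[Q,I]=0$. Choosing the identity as the perturbation on $X_Q$ makes that block contribute nothing to the commutator, and the whole problem concentrates on the semi-regular block $X_1'$.

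On $X_1'$ I would build a finite-rank $F_0$ as follows. Put $N:=N(T_0)$ and $R:=R(T_0)$; choose a closed complement $W$ of $N$ with $T_0|_W:W\to R$ an isomorphism, a finite-dimensional complement $Z$ of $R$ (so $\dim Z=\dim N=\mathrm{codim}\,R<\infty$ by index zero), an isomorphism $J:N\to Z$, and set $F_0:=JP_N$, where $P_N$ is the projection of $X_1'$ onto $N$ along $W$. For $x=u+w$ with $u\in N$ and $w\in W$ one computes $(T_0+\lambda F_0)(u+w)=T_0w+\lambda Ju$, which lies in $R\oplus Z=X_1'$; since $T_0|_W$ and $\lambda J$ are isomorphisms for $\lambda\neq0$, this map is bijective. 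Hence with $F:=F_0\oplus I$ (the identity on $X_Q$) we obtain $T+\lambda F=(T_0+\lambda F_0)\oplus(Q+\lambda I)$ invertible for all $\lambda\in\mathbb{C}\setminus\{0\}$, which is the first assertion.

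For the algebraic identity, block-diagonality of $T$ and $F$ with respect to $X_1'\oplus X_Q$ shows that every word in $T,F$ is block-diagonal, whence $q(T,F)=q(T_0,F_0)\oplus q(Q,I)$ and $[T,F]=[T_0,F_0]\oplus0$; consequently $[T,F]\,q(T,F)\,[T,F]=\big([T_0,F_0]\,q(T_0,F_0)\,[T_0,F_0]\big)\oplus0$, so it suffices to annihilate the $X_1'$-block. Writing $C:=[T_0,F_0]$, the requirement $C\,q(T_0,F_0)\,C=0$ for every polynomial $q$ is equivalent to asking that the smallest subspace invariant under $T_0$ and $F_0$ and containing $R(C)$ be killed by $C$. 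The strategy is to exhibit such an invariant subspace inside $N(F_0)=W$: if $Y$ is $T_0$-invariant and $Y\subseteq N(F_0)$, then $F_0|_Y=0$ and $T_0Y\subseteq Y\subseteq N(F_0)$ force $C|_Y=0$, while $Y$ is automatically invariant under the whole algebra generated by $T_0$ and $F_0$. A direct computation gives $R(C)\subseteq Z+T_0(Z)$, so the natural candidate is the forward orbit $Y:=\sum_{k\ge0}T_0^k(Z)$, which is $T_0$-invariant and contains $R(C)$.

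The step I expect to be the main obstacle is placing $Y$ inside $N(F_0)$, that is, choosing $W$ as a complement of $N$ containing $Y$; this amounts to the transversality $Y\cap N=\{0\}$. Here the reduction to the semi-regular case pays off: since $T_0$ is semi-regular, $N=N(T_0)\subseteq R^\infty(T_0)$, while $Z$ is transverse to $R\supseteq R^\infty(T_0)$, so for a single $z\in Z\setminus\{0\}$ no iterate $T_0^kz$ can lie in $N$ (otherwise $z\in N(T_0^{k+1})\subseteq R^\infty(T_0)$, contradicting $Z\cap R=\{0\}$). Promoting this to all of $Y$, i.e. ruling out that a nontrivial combination $\sum_k T_0^k z_k$ falls into $N\subseteq R^\infty(T_0)$, is the delicate point, and is exactly where the semi-regular structure (surjectivity of $T_0$ on $R^\infty(T_0)$ with kernel $N$) must be invoked; once $Y\cap N=\{0\}$ is secured, extending $Z$ through $Y$ to a complement $W$ of $N$ completes the construction and yields $[T,F]\,q(T,F)\,[T,F]=0$ for every polynomial $q$.
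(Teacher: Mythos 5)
Your overall architecture is sound up to one point, and it is worth saying that this point is exactly where the paper does something different: the paper's own proof never opens up the Weyl summand at all — it writes $T=T_1\oplus T_2$ ($T_1$ Weyl, $T_2$ quasi-nilpotent), quotes \'O Searc\'oid's theorem \cite[Theorem 1.2]{O} to produce $F_1$ with $T_1+\lambda F_1$ invertible and $[T_1,F_1]q(T_1,F_1)[T_1,F_1]=0$, and sets $F=F_1\oplus I_2$. What you are attempting is a self-contained proof of that cited ingredient. Your reduction (Kato decomposition of $T_1$, absorbing the finite-dimensional nilpotent part into the quasi-nilpotent block), the invertibility of $T_0+\lambda F_0$ with $F_0=JP_N$, and the criterion ``find a $T_0$-invariant subspace $Y$ with $R([T_0,F_0])\subseteq Y\subseteq N(F_0)$'' are all correct. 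The gap is in producing $W$. Since $F_0=JP_N$ must be a \emph{bounded} operator, $W=N(P_N)$ must be a \emph{closed} complement of $N$, and since $Y\subseteq W$ forces $\overline{Y}\subseteq W$, what you actually need is $\overline{Y}\cap N=\{0\}$, not $Y\cap N=\{0\}$. Ironically, the algebraic statement you single out as the delicate point is the easy part: if $\sum_{k=0}^m T_0^kz_k\in N$, then since $N\subseteq R^{\infty}(T_0)\subseteq R(T_0)$ one gets $z_0\in Z\cap R(T_0)=\{0\}$, and using $N(T_0^{k})\subseteq R^{\infty}(T_0)$ (valid for semi-regular operators) one peels off $z_1,z_2,\dots$ inductively. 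It is the closure statement that your argument never addresses, and it is false in general.

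Here is a counterexample. On $\ell^2\oplus\ell^2$ let $A$ be the backward shift ($Ae_1=0$, $Ae_{n+1}=e_n$), $S$ the forward shift, and define $B\in\mathcal{B}(\ell^2)$ by blocks: put $M_j=4^j$, $\tau_0=0$, $\tau_j=\tau_{j-1}+M_j+1$, and set $Be_t=2^{-j}e_{\tau_j-t+1}$ for $\tau_{j-1}+1<t\le\tau_j$, $Be_{\tau_{j-1}+1}=0$; within each block $B$ carries distinct basis vectors to $2^{-j}$ times distinct basis vectors, and Cauchy--Schwarz across blocks gives $\|B\|^2\le\sum_j4^{-j}<\infty$. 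Let $T(x\oplus y)=(Ax+By)\oplus Sy$. Then $R(T)=\ell^2\oplus R(S)$ is closed of codimension $1$, $N(T)=\mathbb{C}(e_1\oplus0)$ has dimension $1$, and $N(T)\subseteq R^{\infty}(T)=\ell^2\oplus\{0\}$, so $T$ is exactly one of your operators $T_0$: semi-regular and Fredholm of index $0$. Take the natural complement $Z=\mathbb{C}(0\oplus e_1)$ of $R(T)$. Writing $T^k(0\oplus e_1)=h_k\oplus e_{k+1}$, the recursion $h_k=Ah_{k-1}+Be_k$, $h_0=0$, gives by induction inside the $j$-th block $h_{\tau_{j-1}+1+r}=r\,2^{-j}e_{M_j-r+1}$, hence $h_{\tau_j}=M_j2^{-j}e_1=2^je_1$. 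Therefore $2^{-j}T^{\tau_j}(0\oplus e_1)=e_1\oplus2^{-j}e_{\tau_j+1}\to e_1\oplus0$, a nonzero vector of $N(T)$ lying in $\overline{Y}$. So no closed complement of $N$ can contain $Y$, and the operator $F_0$ you describe simply does not exist for this choice of $Z$. To rescue your route you would have to prove that some \emph{other} choice of $Z$ (or a genuinely different invariant complement) always works; that existence statement is essentially the content of \'O Searc\'oid's theorem itself, which is why the paper quotes it rather than reproving it.
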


\begin{proof}
$T$ is a pseudo B-Weyl operator, then there exists two closed $T$-invariant subspaces $X_{1}$ and $X_{2}$ of $X$
such that $X=X_1\oplus X_2$ and $T_1=T_{\shortmid X_{1}}$ is pseudo B-Weyl and $T_2=T_{\shortmid X_{2}}$ is quasi-nilpotent.
Since $T_1$ is  a Weyl operator $ind(T_1)=0$, according to \cite[Theorem 1.2]{O}, there exists $F_1\in\mathcal{B}(M)$ such that
$T_1+ \lambda F_1$ is invertible and $[T_1,F_1]q(T_1,F_1)[T_1,F_1]=0.$ Where $q(T_1,F_1)$ is any polynomial in $T_1$ and $F_1$.
Set $F=F_1\oplus I_2$ where $I_2$ is the restriction of $I$ to $ X_{2}$. $T_2$ is quasi-nilpotent this implies that $T_2 +\lambda I_2$ is invertible for all $\lambda\neq 0$, hence $T+\lambda F= T_2\oplus T_2+ \lambda(F_1\oplus I_2)=(T_1+ \lambda F_1)\oplus (T_2 +\lambda  I_2)$ is invertible.
On the other hand, $[T_2, I_2]=0$ and $[T_1,F_1]q(T_1,F_1)[T_1,F_1]=0$ therefore $[T,F]q(T,F)[T,F]=0$ where $q(T,F)$ is any polynomial in $T$ and $F$.
\end{proof}

\end{document}